\numberwithin{equation}{section}
\newcounter{cdrow}
\newtheorem{theorem}{Theorem}[]
\newtheorem*{theorem*}{Theorem}
\newtheorem{proposition}[theorem]{Proposition}
\newtheorem*{claim*}{Claim}
\theoremstyle{definition}
\newtheorem{definition}[theorem]{Definition}
\newtheorem*{definition*}{Definition}
\theoremstyle{remark}
\newtheorem{remark}[theorem]{Remark}
\newtheorem{example}[theorem]{Example}
\newtheorem*{example*}{Example}
\newcommand{\R}{\mathbb{R}}
\newcommand{\Pers}{\mathrm{Pers}}
\newcommand{\birth}{\mathrm{birth} \,}
\newcommand{\death}{\mathrm{death} \,}
\renewcommand{\Im}{\mathrm{Im} \,}
\renewcommand{\H}{\mathrm{H}}
\renewcommand{\rank}{\mathrm{rank} \,}
\newcommand{\VR}{\mathrm{VR}}
\title{\Large\bf{Fast Topological Signal Identification and Persistent Cohomological Cycle Matching}}
\date{} 					
\author{In\'{e}s Garc\'{i}a-Redondo\thanks{Department of Mathematics, Imperial College London, UK} \thanks{London School of Geometry and Number Theory, University College London, UK} \\
\texttt{i.garcia-redondo22@imperial.ac.uk}
\and
Anthea Monod\footnotemark[1]\\
\texttt{a.monod@imperial.ac.uk}
\and
Anna Song\footnotemark[1] \thanks{Haematopoietic Stem Cell Laboratory, The Francis Crick Institute, London, UK}\\
\texttt{a.song19@imperial.ac.uk}}
\date{}
\begin{document}
\def\spacingset#1{\renewcommand{\baselinestretch}%
{#1}\small\normalsize} \spacingset{1}
\maketitle


\section*{Abstract}
Within the context of topological data analysis, the problems of identifying topological significance and matching signals across datasets are important and useful inferential tasks in many applications. The limitation of existing solutions to these problems, however, is computational speed. In this paper, we harness the state-of-the-art for persistent homology computation by studying the problem of determining topological prevalence and cycle matching using a cohomological approach, which increases their feasibility and applicability to a wider variety of applications and contexts. We demonstrate this on a wide range of real-life, large-scale, and complex datasets. We extend existing notions of topological prevalence and cycle matching  to include general non-Morse filtrations. This provides the most general and flexible state-of-the-art adaptation of topological signal identification and persistent cycle matching, which performs comparisons of orders of ten for thousands of sampled points in a matter of minutes on standard institutional HPC CPU facilities.

\paragraph{Keywords:} Absolute (co)homology; cycle matching; cycle prevalence; image-persistent homology; relative (co)homology.

\section*{Introduction}

\textit{Persistent homology}, one of the cornerstones of topological data analysis, studies the lifespan of the topological features in a nested sequence of topological spaces by tracking the changes in its homology groups. It provides a robust statistical summary of data, capturing its ``shape'' and ``size'' and has been been applied to many scientific disciplines in recent years with great success. The diagram consisting of the homology groups of the filtration connected by the maps induced by the inclusions is called the \textit{persistence module}. From this, the \textit{persistence barcode} (or simply, \emph{barcode}) is derived---a canonical summary of the aforementioned lifespans as a set of half-open intervals.

A natural question is whether it is possible to compare the barcodes obtained from different filtrations, which would, for instance, provide a correspondence between some of their intervals. Several solutions have been proposed. \cite{gonzalez-diaz_basis-independent_2020} derive a basis-independent partial matching for ladder modules and zigzag persistence. A different method of persistent extension to find analogous bars---especially interesting if there is no known mapping between the persistence modules---was very recently introduced by \cite{yoon_persistent_2022}. \cite{bauer_induced_2015} match intervals of barcodes using a known mapping between the persistence modules. This notion was recently reinterpreted in statistical terms by \cite{reani_cycle_2021},  who propose a similar interval matching using \textit{image-persistence}, which was first introduced by \cite{cohen-steiner_persistent_2009}. This matching is applied to define a \textit{prevalence score}---a measure of the significance of a given interval in a barcode. Typically, \emph{persistence} (i.e., the length of the interval) is interpreted as topological significance or signal: longer intervals correspond to ``true'' features, while short ones are attributed to topological noise. However, this practice can be misleading since persistence is highly affected by the distance of sampling points and usually has a higher value for cycles created at a larger scale. The prevalence score proposed by \cite{reani_cycle_2021} bypasses this shortcoming by taking into account the statistical heuristics of the problem: it is obtained by matching persistence intervals across diagrams of several resamplings of the data. 

A limitation common to all previously proposed barcode comparison techniques is that they are all computationally very expensive, which significantly limits their practicality in many applications and to many real datasets.  In this paper, we address this specific issue by leveraging the current state-of-the-art in persistent homology computation, \emph{Ripser} \citep{bauer_ripser_2021}, which studies the dual perspective and computes persistent \emph{cohomology}, thus taking advantage of its equivalence to persistent homology \citep{silva_dualities_2011}.  Furthermore, recently, Ripser was adapted to the setting of image-persistence via Ripser-image \citep{bauer_efficient_2022}.  We apply this technology to the interval matching approach proposed by \cite{reani_cycle_2021}, as well as specialize and extend their definitions to allow for greater flexibility and applicability.  The final result of our contributions is state-of-the-art software for interval matching, executable in a matter of minutes using only standard institutional high performance computing facilities, which we showcase on a wide variety of complex and large-scale datasets, such as static and time-lapse imaging and video data from biomedical and astrophysical applications.

\subsubsection*{Contributions}
\begin{itemize}
    \item We specialize the definition of interval matching proposed by \cite{reani_cycle_2021} to \emph{simplex-wise filtrations} (see Definition \ref{def:simplexwise_filtration}), making it compatible with the output of Ripser-image \citep{bauer_efficient_2022} and more widely applicable.
    \item We present a comprehensive case study 
    of different definitions for a matching affinity score
    that extends the original score
    proposed by \cite{reani_cycle_2021}.
    \item We provide state-of-the-art code for interval matching, freely available at \url{https://github.com/inesgare/interval-matching}. 
    \item We comprehensively showcase and demonstrate representative applications of our specialized definitions and code to complex and large-scale datasets.
\end{itemize} 

\subsubsection*{Outline}

We begin by introducing the fundamentals of persistent homology and set relevant notations in section \ref{sec:preliminaries}. 
We also review image-persistence and use it to present interval matching as proposed in \cite{reani_cycle_2021}. 
In section \ref{sec:cycle_matching_cohomology}, we adapt the definition of image-persistence to the various homology settings and study how these frameworks are related. Here, we propose our specialized definition of interval matching and revisit the notion of matching affinity by \cite{reani_cycle_2021} in a case study of alternative formulations.
In section \ref{sec:applications}, we present applications of the notion of cycle matching to a variety of data sets diverse in nature and aimed at different objectives. We close with a discussion of our contributions and proposals for future work in section \ref{sec:end}.

\section{Preliminaries}
\label{sec:preliminaries}

In this section we introduce the fundamental concepts underlying our work and establish some relevant notations that we will use throughout the rest of the paper.

\subsection{The Four Standard Persistence Modules}
\label{subsec:standard_persistence_modules}

A \textit{filtration} is a family of nested subspaces \(\{X_t: t \in T\}\) of some space $X$
$$X_t \subset X_s \subset X, \quad \mathrm{for}\ t \leq s, $$
where \(T \subset \R\) is a totally ordered indexing set. In this paper, we work with \textit{filtered complexes}, specifically, we further assume that $X$ is a finite simplicial complex and the spaces $X_t$ are simplicial subcomplexes of $X$. Filtered complexes can also be interpreted as diagrams \(X_\bullet: T \to \mathbf{\mathrm{Simp}}\) of simplicial complexes indexed over some finite totally ordered set \(T\), such that all maps in the diagram are inclusions.

A \textit{re-indexing} of a filtration changes the indexing set \(T\) to another totally ordered set \(I\) using some monotonic map \(r: I \to T\) so that \(X_{r(i)} = X_t\). For instance, if \(\{X_t: t \in T\}\) is a filtered complex, \(T = \{t_1,\ldots,t_n\}\) is finite  and we have the re-indexing \(r(i) = t_i\) that allows for a reparameterization of the filtration over the natural numbers \(\{X_i: 1\leq i \leq n\}\).

Applying the corresponding homology functor to the simplicial complexes in a filtered complex and the inclusions \(X_i \subset X_{i+1}\) between consecutive spaces gives us the following diagrams
\begin{alignat}{10}
    \H_*(X_\bullet):\ & & & \ \H_*(X_1) & \ \rightarrow \  & \ldots & \ \rightarrow \  & \ \H_*(X_{n-1}) & \ \rightarrow \ & \ \H_*(X_{n}), \label{eq:ah} \\[2pt]
	\H^*(X_\bullet):\ & & & \ \H^*(X_1) & \ \leftarrow \ & \ldots & \ \leftarrow \ & \ \H^*(X_{n-1}) & \ \leftarrow \ & \ \H^*(X_{n})  , \label{eq:ac}\\[2pt]
	\H_*(X, X_\bullet):\ & \ \H_*(X_n) & \ \rightarrow \ & \ \H_*(X, X_1) & \ \rightarrow \ & \ldots & \ \rightarrow \ &  \ \H_*(X, X_{n-1}), & &  \label{eq:rh}\\[2pt]
	\H^*(X, X_\bullet):\ & \ \H^*(X_n) & \ \leftarrow \ & \ \H^*(X, X_1)& \ \leftarrow \ & \ldots & \leftarrow & \ \H^*(X, X_{n-1}). \label{eq:rc}
\end{alignat}
Following \cite{silva_dualities_2011}, we will call these the \emph{four standard persistence modules}. The first persistence module \eqref{eq:ah} corresponds to \textit{absolute homology} and is the one most often used. The expressions following are the persistence modules for \textit{absolute cohomology \eqref{eq:ac}, relative homology \eqref{eq:rh},} and \emph{relative cohomology \eqref{eq:rc}}. 
Unless otherwise stated, homology and cohomology will have field coefficients, so that these persistence modules are made up of vector spaces and linear maps.

The assumption of field coefficients allows us to invoke the \emph{structure theorem} \citep{zomorodian_computing_2005}. This is one of the foundational results in persistent homology and ensures that, up to isomorphism, any persistence module, such as the ones above, can be decomposed in a direct sum of \textit{interval modules}. An interval module consists of copies of the field of coefficients over an interval range of indices; these copies are connected by the identity map and the trivial vector space outside of that interval. This allows for the interpretation that some (co)cycle is \emph{born} at the beginning of the interval and \emph{dies} at the end of it. For instance, for the absolute homology module, 
\[\H_*(X_\bullet) \cong \bigoplus_{m = 1}^M I_{[b_m,\, d_m]},\]
where the sub-index denotes the range of indices over which the interval module is nontrivial.

The collection of intervals that appears in the decomposition of the structure theorem is an invariant of the isomorphism type of the persistence module. This collection is the \emph{persistence barcode} of the filtration
\[
\Pers(\H_*(X_\bullet)) = \big\{ [b_m,\, d_m] \big\}_{m = 1}^M.
\]
The intervals from the barcode are called \textit{persistence intervals} and the start and end points of the intervals are the \textit{persistence pairs}.

The persistence barcode provides a summary of the lifespans of the topological features of the filtration.
Persistence pairs are often interpreted with real indices \(t_{b_m}\) and \(t_{d_m}\) associated to the natural indices \(b_m\) and \(d_m\) via re-indexing. In this case, the convention dictates that the barcode be represented by half-open intervals. These intervals exclude the real-valued death time of the subsequent step of the filtration, in relation to when the feature was last designated ``alive'' using natural indices:
$$
\Pers(\H_*(X_\bullet)) = \big\{ [t_{b_m},\, t_{d_{m} + 1}) \big\}_{m = 1}^M.
$$
This convention also involves setting \(t_0 = -\infty\)---notice that the index \(i=0\) might appear in the barcodes of relative (co)homology---and \(t_{M+1} = \infty\). It is also customary to discard intervals where \(t_{b_m} = t_{d_{m}+1}\).

\begin{example}
Consider the filtration in Figure \ref{fig:filtration_triangle}, where our filtered complex is a triangle with vertices at \((0,0),\, (1,0)\), and \((\sqrt{3}/2,\sqrt{3}/2)\), where we add edges of increasing length and finally fill in the triangle.
\begin{figure}[H]
    \centering
    \includegraphics[width=0.8\columnwidth]{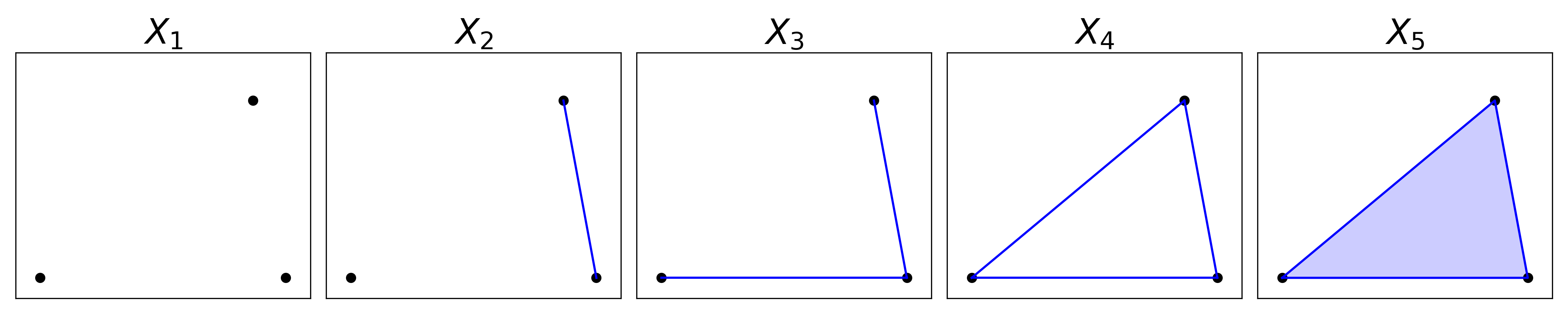}
    \caption{Filtration of a triangle indexed with natural numbers.}
    \label{fig:filtration_triangle}
\end{figure}
The barcodes for such a filtration for the \(0\)- and \(1\)-dimensional homologies are
\[\Pers(H_0(X_\bullet)) = \{[1,1],\, [1,2], [1,5]\}, \quad \Pers(H_1(X_\bullet)) = \{[4,4]\}.\]
Considering the re-indexing given by the diameter of the larger simplex in the complex $t_i=\max\{ \mathrm{diam}\,(\sigma): \sigma \in X_i\}$, the previous barcodes would then become
\[\Pers(H_0(X_\bullet)) = \{[0,\,0.88),\, [0,\,1), [0,\,+\infty)\}, \quad \Pers(H_1(X_\bullet)) = \{[1.22,\,1.22)\},\]
so that we may discard the \(1\)-dimensional persistent homology.
\end{example}

The four standard persistence modules carry the same information: the barcode of the setting of absolute (or relative) homology setting is the same as the barcode of the setting of absolute (or relative) cohomology. We can also find a bijection between the bars in the barcodes of the relative setting and the bars in the corresponding absolute setting. For further the details on this equivalence, see \cite{silva_dualities_2011}.

\subsection{Image-Persistence and Interval Matching}
\label{subsec:image_persistence}

The idea underlying image-persistence is to study the persistent homology of some filtered complex inside another larger filtered complex. Let \(X\) and \(Z\) be finite simplicial complexes and \( f : X \to Z\) an injective map between them. Let \(\{X_i : 1 \leq i \leq n\}\) and \(\{Z_i: 1 \leq i \leq n\}\) be filtrations associated to the previous complexes and denote the restrictions to the steps in the filtrations by
\[f_i := f\vert_{X_i} : X_i \to Z_i. \]
Note that these are also injective maps, which gives rise to the following commutative diagram for all \(1\leq i\leq n-1\):
\[\begin{CD}
		X_i @>\iota_i^X>> X_{i+1} \\
		@Vf_iVV @VVf_{i+1}V  \\
		Z_i @>\iota_i^Z>> Z_{i+1} \\
\end{CD}\]
where \(\iota_i^X\) and \(\iota_i^Z\) are the corresponding inclusion maps between consecutive steps in the corresponding filtration. 

Applying the homology functor to the previous diagram gives rise to another commutative diagram:
\begin{align*}
	\begin{CD}
		\H_*(X_i) @>\H_*(\iota_i^X)>> \H_*(X_{i+1}) \\
		@V\H_*(f_i)VV @VV\H_*(f_{i+1})V \qquad  \\
		\H_*(Z_i) @>\H_*(\iota_i^Z)>> \H_*(Z_{i+1}) \\
	\end{CD}
\end{align*}
which now involves the homology groups and the induced linear maps. The commutativity of this diagram allows for the following definition.

\begin{definition}[Image-persistent homology]
	The persistence module
	\[ \Im \H_* (f_\bullet) : \quad \mathrm{Im}(\H_*(f_i)) \to \mathrm{Im}(\H_*(f_{i+1}))   \]
	given by the subspaces \(\mathrm{Im}(\H_*(f_i))\subset \H_*(Z_i)\) and the restrictions of the maps \(\H_*(\iota_i^Z)\) is called \emph{image-persistent homology}.
\end{definition}

The elements in \(\Im(\H_*(f_i))\) can be seen as \textit{cycles in \(X_i\) up to boundaries in \(Z_i\)}, which we gain by studying one filtration inside another. \\

\begin{remark}
\label{rmk:relation_image_persistence_and_original_modules}
    Since \(\Im \H_*(f_i)\) is a subspace of \(\H_*(Z_i)\), a death in the image-persistence module implies a death in the persistent homology of the space \(Z\). At a topological level, this implication can be linked to the fact that every cycle from an image-persistence module is in fact a cycle in the codomain of the function used to define the image-persistence module. This cycle may have been born before as a cycle of the codomain, however, it gets bounded at the same time for both the image-persistence and the persistence module of the codomain. On the other hand, a birth in the image-persistence module implies a birth in the persistent homology of the space \(X\). This is a result from the fact that every cycle in the image persistence corresponds to a cycle of the domain, which may get bounded in the image-persistence module at an earlier time as it is studied within the ``larger'' codomain.  See \cite{cohen-steiner_persistent_2009} for further details on the relations between these three modules.
\end{remark}

The definition of interval matching introduced by \cite{reani_cycle_2021} is based on image-persistence to compare the persistence bars of two diagrams and is restricted to \emph{Morse filtrations}. 

\begin{definition}[Morse filtration] A filtration $\{X_t:t\in \R\}$ is a \emph{Morse filtration} if there exists a finite set $T= \{t_1,...,t_n\}\subset \R$ such that the following are satisfied:
	\begin{enumerate}
		\item For all $t\notin T$, there exists $\epsilon>0$ small enough such that for every $0<\epsilon'<\epsilon$ the map 
		$$ \H_*(i) : \H_*(X_{t-\epsilon'}) \to \H_*(X_{t+\epsilon'}) $$
		induced by inclusion is an isomorphism for every homology group. Equivalently, the homology does not change at $t$.
		\item For all $t\in T$, there exists $\epsilon>0$ small enough so that for any $0<\epsilon'<\epsilon$ either
		\begin{enumerate}
			\item $ \H_*(i) : \H_*(X_{t-\epsilon'}) \to \H_*(X_{t+\epsilon'}) $ is injective and the dimension of the vector space increases by one, or
			\item $ \H_*(i) : \H_*(X_{t-\epsilon'}) \to \H_k(X_{t+\epsilon'}) $ is surjective and the dimension decreases by one.
		\end{enumerate}
		Equivalently, the homology changes allowed are either the creation of a single new cycle or the termination of a single existing cycle.
	\end{enumerate}
	\label{def:morse_filtration}
\end{definition}

We now review how to match the persistence intervals of two filtered complexes inside a third comparison space. Let $X, Y, Z$ be finite simplicial complexes with Morse filtrations \(\{X_i: 1 \leq i \leq n\}\), \(\{Y_i: 1 \leq i \leq n\}\), and \(\{Z_i: 1 \leq i \leq n\}\). Assume we have injective maps
\[f_i : X_i \to Z_i, \qquad g_i: Y_i \to Z_i\]
for every \(1\leq i\leq n\) such that \(f_j\vert_{X_i} = f_i\) and \(g_j\vert_{Y_i} = g_i\) for every \(i\leq j\). With these assumptions, \cite{reani_cycle_2021} match persistence intervals as follows.

\begin{definition}[Matching intervals, \cite{reani_cycle_2021}]
	\label{def:cycle_matching_morse}
Let $\alpha \in \Pers(\H_*(X_\bullet))$ and $\beta \in \Pers(\H_*(Y_\bullet))$. The intervals $\alpha$ and $\beta$ are \emph{matching intervals via $Z_\bullet$} if there exist $\tilde{\alpha} \in \Pers(\Im \H_*(f_\bullet))$ and $\tilde{\beta} \in \Pers(\Im \H_*(g_\bullet))$ such that
\begin{align*}
\birth \alpha &= \birth \tilde{\alpha} \\
\birth \beta &= \birth \tilde{\beta} \\
\death \tilde \alpha &= \death \tilde{\beta}.
\end{align*}
\end{definition}

The intuition behind these criteria for matching intervals stems from Remark \ref{rmk:relation_image_persistence_and_original_modules}: every bar in the barcode of the image-persistence module \(\H_\ast(f_\bullet)\) arises from a bar in the barcode of the persistence module \(\H_\ast(X_\bullet)\) so that both share the same birth time; we have the same respective coincidence for the modules \(\H_\ast(g_\bullet)\) and \(\H_\ast (Y_\bullet)\). This justifies our procedure to first match a bar \( \alpha \in \Pers(\H_\ast(X_\bullet))\) and a bar \(\beta \in \Pers(\H_\ast(Y_\bullet))\) with the bars \( \tilde{\alpha} \in \Pers(\H_\ast(f_\bullet))\) and \(\tilde{\beta} \in \Pers(\H_\ast(g_\bullet))\) when their birth times coincide. Similarly, the death of a bar in an image-persistence module implies that there is a bar in the barcode of \(\H_\ast(Z_\bullet)\) sharing that same death time. Consequently, once we have identified the bars \(\tilde{\alpha}\) and \(\tilde{\beta}\), the rule to match them is that they both share the same death time, and thus are related to the same persistence interval of the module \(H_\ast(Z_\bullet)\).

\begin{remark}
	\label{rmk:morse_condition_matching}
	The Morse assumption is crucial in order for the notion in Definition \ref{def:cycle_matching_morse} to be well-defined. Having Morse filtrations for \(X\) and \(Y\) ensures that there is at most one birth at each time in \(\H_*(X_\bullet)\) and \(\H_*(Y_\bullet)\). From the definition of image-persistence, this also holds in the respective image-persistence modules. Recall that a birth in the image-persistence module means a birth in the corresponding persistent homology module of \(X\) or \(Y\). This allows each bar from the image-persistence module to have the same birth time as exactly one bar in the associated persistent homology. 
	
	From the death perspective, recall that a death happening in any of the image-persistence modules means a death in \(\H_*(Z_\bullet)\). Thus, there is also at most one bar in each image-persistence diagram dying at any given time. Consequently, each bar from an image-persistence module can share the same death time with at most one bar from the other image-persistence module.  These notions of uniqueness induced by assuming Morse filtrations guarantee that there are no ambiguous matchings.
\end{remark}

\subsection{Matching Affinity and Prevalence Score}
\label{sec:matching_and_prevalence}

The \emph{prevalence score} was proposed by \cite{reani_cycle_2021} as an alternative measure to persistence---in the sense of interval length---as an indicator for topological significance in noisy data.  It takes inspiration from bootstrapping techniques, which is a well-known and powerful subsampling with replacement method, originally proposed in the statistical literature by \cite{efron1982jackknife}. 

The formulation of the prevalence score takes into account the inherent tendency that as the sample size grows, noisy generators tend to reappear frequently.  Due to this tendency, the \emph{affinity} of a match must first be discussed before prevalence may be considered. Affinity is a score assigned to every match that considers the lifetimes of the persistent cycles and image-persistent cycles involved in the definition of interval matching. Recall that the \emph{Jaccard index} of two intervals \(I\) and \(J\) is given by
\[\mathrm{Jac}(I,J) : = \dfrac{\norm{I \cap J}}{\norm{I \cup J}},\]
where \(\norm{\cdot}\) denotes the length of an interval.
\begin{definition}[Matching affinity, \cite{reani_cycle_2021}]
	\label{def:affinity}
	The \emph{matching affinity} of two bars \(\alpha, \beta\) matching through their image-bars \(\tilde{\alpha}, \tilde{\beta}\) is defined as the product
	\[\rho(\alpha,\beta) := \mathrm{Jac}(\alpha,\beta) \cdot \mathrm{Jac}(\alpha, \tilde{\alpha})\cdot \mathrm{Jac}(\beta,\tilde{\beta}).\]
\end{definition}
With this definition, the prevalence score may now be formally introduced.
\begin{definition}[Prevalence score,  \cite{reani_cycle_2021}]
	\label{def:prevalence}
	Given some reference space \(X = X_{\mathrm{ref}}\) and resampling spaces \(X^{(1)},\ldots,X^{(K)}\) , any \(\alpha \in \Pers(H_*(X_\bullet))\) has a \emph{prevalence score} defined by
	\[\mathrm{prev}(\alpha):= \dfrac{1}{K} \sum_{k=1}^{K} \rho (\alpha, \beta_k(\alpha)),\]
	where \(\beta_k(\alpha)\) is the unique bar in \(X^{(k)}\), for \(1\leq k\leq K\), matched to \(\alpha\) using \(Z = X_{\mathrm{ref}}\cup X^{(k)}\) as a comparison space and the inclusions into the union as the connecting maps \(f: X_{\mathrm{ref}} \to Z\) and \(g: X^{(k)} \to Z\) (if a matching does not exist, set \(\rho = 0\)).
\end{definition}

\subsection{Clearing Algorithm and Cohomology}
\label{subsec:ripser}

The basic algorithm to compute the persistent homology of a filtered complex is based on reducing each column of the matrix of the boundary operator on the complex by adding columns on its left, from left to right, to obtain a reduced matrix.  From this reduced matrix, the barcode can be readily attained.

\cite{chen_persistent_2011} proposed an optimization of this process called the \emph{clearing algorithm}, based on the observation that since the matrix to reduce comes from a boundary operator, some columns in the reduced matrix must be null after the reduction and do not play a role in the reduction process. The clearing algorithm then reduces the boundary matrix in blocks from right to left so that it becomes possible to detect these null columns beforehand and set them directly to \(0\). In that way, it is possible to avoid reducing these columns and thereby accelerate the computation. However, the increase in speed is burdened by the large number of columns in the first block that must be reduced in the boundary matrix.

In an application to compute circular coordinates, \cite{de_silva_persistent_2011} observed that computing persistent cohomology was generally faster than computing persistent homology. This phenomenon was later confirmed by \cite{bauer_phat_2017}, who also realized that this increase in speed was coming from an implicit use of the clearing algorithm to compute persistent cohomology by  \cite{de_silva_persistent_2011}. One of the contributions of Ripser \citep{bauer_ripser_2021}, which also implements this optimization, is to provide a formal argument for this increase in speed: the advantage of using the clearing algorithm in persistent cohomology stems from the fact that in the relative coboundary matrix, the first block to reduce is significantly smaller.
From the reduced coboundary matrix, the barcode for the relative cohomology setting (\ref{eq:rc}) is read off, which is equivalent to the barcode for the homology setting (\ref{eq:ah}) as established by \cite{silva_dualities_2011}.

We note in particular that Ripser only considers Vietoris--Rips persistent homology, which is based on the Vietoris--Rips filtration, and does not compute other filtrations.  The Vietoris--Rips filtration is a standard filtration often considered in computational applications and settings.  
Recall that the Vietoris--Rips filtration of a finite metric space $(\mathcal{P},d)$ is $\VR_\bullet(\mathcal{P}, d)$ where the simplicial complex at filtration value $\epsilon$ is
\[\VR_\epsilon(\mathcal{P}, d) = \{ \emptyset \neq S \subset \mathcal{P} ~\vert~ \forall\ p,q \in S, \, d(p,q) \leq \epsilon \}.\]
By applying the homology functor, we obtain the Vietoris--Rips persistent homology $\H_*(\VR_\bullet(\mathcal{P}, d))$.

\section{Cycle Matching in the Setting of Cohomology}
\label{sec:cycle_matching_cohomology}

In this section, we present our theoretical contributions.  Specifically, we address the current gaps in the literature by providing a comprehensive account of the extension of image-persistence to the outstanding settings of the four standard persistence modules and the relations between such modules.
Subsequently, we specialize the definition of interval matching to simplex-wise filtrations and outline how to implement our specialization using Ripser-image. We finish the section with a case study of alternative definitions of the matching affinity.

\subsection{The Four Image-Persistence Modules}

We only need functoriality applied to the following commutative diagram 
\[\begin{CD}
		X_i @>\iota_i^X>> X_{i+1} \\
		@Vf_iVV @VVf_{i+1}V  \\
		Z_i @>\iota_i^Z>> Z_{i+1} \\
\end{CD}\]
for \(1\leq i\leq n-1\) to define image-persistence, which can then be easily extended to obtain four image-persistence modules as parallels to the four standard persistence modules presented previously in Section \ref{subsec:standard_persistence_modules}. 

In the setting of \textit{absolute cohomology}, applying the corresponding homology functor gives us the following commutative diagram
\[\begin{CD}
	\H^*(X_i) @<\H^*(\iota_i^X)<< \H^*(X_{i+1}) \\
	@A\H^*(f_i)AA @AA\H^*(f_{i+1})A \\
	\H^*(Z_i) @<\H^*(\iota_i^Z)<< \H^*(Z_{i+1}) \\
\end{CD} 
\]
for every \(1\leq i \leq n-1\). Since we are working with field coefficients, the objects with superscripts are dual to the objects with subscripts from the diagram for homology. The commutativity of the diagram allows for the following definition.

\begin{definition}[Image-persistent cohomology]
	The \emph{image-persistent cohomology} is defined as the persistence module
	\[ \Im \H^* (f_\bullet) : \quad \mathrm{Im}(\H^*(f_i)) \to \mathrm{Im}(\H^*(f_{i-1}))   \]
	given by the subspaces \(\mathrm{Im}(\H^*(f_i)) \subset \H^*(X_i)\) and the restrictions of the maps \(\H^*(\iota_i^X)\).
\end{definition} 

We now consider the relative settings. Recall that we have a map \(f: X \to Z\) such that \(f(X_i) \subset Z_i\); we  denote this by
\[ f : (X, X_i) \to (Z, Z_i)\]
for every \(1 \leq i \leq n\). Since we also have \(X_i \subset X_{j} \subset X\), for \(i\leq j\), we can write
\[ \iota^X : (X,X_i) \to (X, X_{j}), \quad \ i\leq j\]
for the identity in \(X\). The same can be written for the identity \(\iota^Z\)
\[ \iota^Z : (Z,Z_i) \to (Z, Z_{j}), \quad \ i\leq j.\]
This gives us the following commutative diagram
\[\begin{CD}
	(X,X_i) @>\iota^X>> (X,X_{i+1}) \\
	@VfVV @VVfV  \\
	(Z,Z_i) @>\iota^Z>> (Z,Z_{i+1}) \\
\end{CD}\]
for \(1 \leq i \leq n-1\). Using functoriality in the relative setting we obtain the subsequent commutative diagrams
\[\begin{CD}
	\H_*(X,X_{i}) @>\H_*(\iota^X)>> \H_*(X,X_{i+1}) \\
	@V\H_*(f, f_i) VV @VV\H_*(f, f_{i+1})V  \\
	\H_* (Z,Z_{i}) @>\H_*(\iota^Z)>> \H_*(Z,Z_{i+1}) \\
\end{CD}\hspace{16pt}, \hspace{25pt}
\begin{CD}
	\H^*(X,X_{i}) @<\H^*(\iota^X)<< \H^*(X,X_{i+1}) \\
	@A\H^*(f, f_i) AA @AA\H^*(f, f_{i+1})A  \\
	\H^* (Z,Z_{i}) @<\H^*(\iota^Z)<< \H^*(Z,Z_{i+1}) \\
\end{CD}\]
where again, the subscripts and superscripts mean duality. Observe the notation of the homology functor applied to \(f : (X, X_i) \to (Z, Z_i)\). Commutativity again allows for the following definitions.

\begin{definition}[Image-persistent relative homology]
	The \emph{image-persistent relative homology} is the persistence module
	\[\Im \H_* (f, f_\bullet) : \quad \mathrm{Im}(\H_*(f, f_i)) \to \mathrm{Im}(\H_*(f, f_{i+1}))   \]
	given by the vector spaces \(\mathrm{Im}(\H_*(f, f_i)) \subset \H_*(Z,Z_i)\) and the restrictions of the linear maps \(\H_*(\iota^Z)\).
\end{definition}

\begin{definition}[Image-persistent relative cohomology]
	The \emph{image-persistent relative cohomology} is the persistence module
	\[\Im \H^*(f, f_\bullet) : \quad \mathrm{Im}(\H^*(f, f_i)) \to \mathrm{Im}(\H^*(f,f_{i-1}))  \]
	given by the vector spaces \(\mathrm{Im}(\H^*(f,f_i)) \subset \H^*(X,X_i)\) and the restrictions of the linear maps \(\H^*(\iota^X)\).
\end{definition}

\begin{remark}
    The four image-persistence modules above currently appear in \cite{bauer_lifespan_2021} as an example of application of a broader theory of \emph{lifespan functors}, i.e., endofunctors in the category of persistence modules and \emph{matching diagrams}---equivalent to barcodes---which are related to boundedness properties of the intervals. Their work is precisely motivated by the extension of the duality results by \cite{silva_dualities_2011} to the setting of image-persistence. A version of a result from \cite{bauer_lifespan_2021} is then revisited in \cite{bauer_efficient_2022} in Proposition 3.12 in order to implement Ripser-image, which plays a central role in our work. This result will be further explained in Section \ref{sec:equivalence_image_pers_settings} and is fully referenced in Proposition \ref{prop:bauer_schmahl}.
\end{remark}

\subsection{Equivalence Among the Four Image-Persistence Settings}
\label{sec:equivalence_image_pers_settings}

A natural question to ask after introducing the four image-persistence modules is whether we can expect equivalences among them akin to the ones proved by \cite{silva_dualities_2011} for the standard persistence modules. In search of a first immediate answer to this question, we check directly whether the persistence modules of homology and cohomology provide the same information. 
\begin{proposition}
	The following equalities hold:
	\begin{align*}
		\Pers(\Im \H_*(f_\bullet)) &= \Pers(\Im \H^*(f_\bullet)), \\
		\Pers(\Im \H_*(f, f_\bullet)) &= \Pers(\Im \H^*(f,f_\bullet)).
	\end{align*}
\end{proposition}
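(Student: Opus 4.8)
The plan is to exhibit each image-persistent cohomology module as the linear-algebraic dual of the corresponding image-persistent homology module, and then to invoke the fact that dualizing a persistence module over a finite linear order preserves its barcode. The two displayed equalities then follow at once, the relative case being formally identical to the absolute one after replacing $\H_*(X_i)$ by $\H_*(X,X_i)$ and $\H_*(Z_i)$ by $\H_*(Z,Z_i)$ throughout; so I would carry out the absolute case in detail and indicate that the relative case follows verbatim.

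First I would record two elementary inputs. Since we use field coefficients, the cochain complex is the dual of the chain complex, so $\H^*(-) \cong \H_*(-)^*$ naturally and every induced map is a transpose: $\H^*(\iota_i^X) = \H_*(\iota_i^X)^\T$ and $\H^*(f_i) = \H_*(f_i)^\T$. Next, for any linear map $\phi\colon V \to W$ one has $\ker(\phi^\T) = \mathrm{Ann}(\Im \phi)$, so factoring $\phi$ as the corestriction $p\colon V \twoheadrightarrow \Im\phi$ followed by the inclusion $\Im\phi \hookrightarrow W$ shows $\Im(\phi^\T) = \Im(p^\T)$, where $p^\T\colon (\Im\phi)^* \hookrightarrow V^*$ is injective. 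Applied to $\phi = \H_*(f_i)$, this identifies $\mathrm{Im}(\H^*(f_i)) = \Im(\H_*(f_i)^\T)$ with $(\mathrm{Im}\,\H_*(f_i))^*$ via $p_i^\T$, where $p_i\colon \H_*(X_i) \twoheadrightarrow \mathrm{Im}(\H_*(f_i))$ is the corestriction of $\H_*(f_i)$.

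The crux is to see that these pointwise identifications intertwine the connecting maps, and here the only real subtlety appears: the homology image module lives on the $Z$-side (restrictions of $\H_*(\iota_i^Z)$), whereas the cohomology image module lives on the $X$-side (restrictions of $\H^*(\iota_i^X)$). I would reconcile the two using the commutative square from the definition. Writing $g_i$ for the restriction of $\H_*(\iota_i^Z)$ to $\mathrm{Im}(\H_*(f_i))$, the identity $\H_*(\iota_i^Z)\circ\H_*(f_i) = \H_*(f_{i+1})\circ\H_*(\iota_i^X)$ yields $g_i\circ p_i = p_{i+1}\circ\H_*(\iota_i^X)$. Transposing this square and using $\H^*(\iota_i^X) = \H_*(\iota_i^X)^\T$ gives $p_i^\T\circ g_i^\T = \H^*(\iota_i^X)\circ p_{i+1}^\T$; since $p_i^\T$ and $p_{i+1}^\T$ are precisely the inclusions identifying $(\mathrm{Im}\,\H_*(f_i))^*$ and $(\mathrm{Im}\,\H_*(f_{i+1}))^*$ with $\mathrm{Im}(\H^*(f_i))$ and $\mathrm{Im}(\H^*(f_{i+1}))$, this says exactly that $g_i^\T$ is the restriction of $\H^*(\iota_i^X)$ to $\mathrm{Im}(\H^*(f_{i+1}))$. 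Hence $\Im\H^*(f_\bullet)$ is isomorphic to the dual persistence module of $\Im\H_*(f_\bullet)$, with arrows reversed as in its definition.

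Finally I would close with the duality principle for persistence modules over a finite linear order: transposing all structure maps leaves the rank invariants $\rank(M_i\to M_j)$ unchanged, and since these determine the interval multiplicities by inclusion--exclusion---equivalently, the structure theorem decomposes the module into interval modules, each of which is carried to itself under dualization and order reversal---the barcode is preserved. This is the same mechanism behind the duality results of \cite{silva_dualities_2011} for the standard modules, now applied to the image-persistence modules. Combined with the module identification above, it gives $\Pers(\Im\H_*(f_\bullet)) = \Pers(\Im\H^*(f_\bullet))$, and the relative identity follows identically. I expect the $Z$-versus-$X$ reconciliation of the connecting maps in the third step to be the main obstacle; once the commuting square is transposed correctly, the remainder is the standard transpose-invariance of barcodes.
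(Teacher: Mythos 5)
Your proposal is correct, but it takes a more structural route than the paper. The paper's proof never constructs an isomorphism of persistence modules: it simply shows that the two families of structure maps have equal rank invariants, via the four-step chain
\begin{align*}
\rank {\H_*(\iota_{i,j}^Z)}\vert_{\Im(\H_*(f_i))} &= \rank\left(\H_*(\iota_{i,j}^Z)\circ\H_*(f_i)\right) = \rank\left(\H^*(f_i)\circ\H^*(\iota_{i,j}^Z)\right) \\ &= \rank\left(\H^*(\iota_{i,j}^X)\circ\H^*(f_j)\right) = \rank {\H^*(\iota_{i,j}^X)}\vert_{\Im(\H^*(f_j))},
\end{align*}
where the outer equalities use the elementary fact that precomposing with a surjection onto the image does not change rank, the second is duality of induced maps (transposition preserves rank), and the third is commutativity of the cohomology square; the conclusion then follows because barcodes are determined by dimensions and ranks. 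Your argument upgrades this to an explicit isomorphism of persistence modules, identifying $\Im\H^*(f_\bullet)$ with the dual of $\Im\H_*(f_\bullet)$ via the injections $p_i^\T$ and checking that these intertwine the structure maps, before invoking barcode-invariance under dualization. The two proofs rest on exactly the same inputs (field-coefficient duality and the commuting square), and your final step is itself proved by the same rank-invariance the paper appeals to directly; what your version buys is a stronger conclusion (a natural isomorphism of modules rather than mere equality of barcodes) and an explicit resolution of the $X$-side versus $Z$-side asymmetry, at the cost of the corestriction bookkeeping that the paper's rank computation sidesteps entirely. Both arguments transfer verbatim to the relative setting, as you note.
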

\begin{proof}
It is sufficient to prove that the maps naturally induced between the images,
$$ {\H_*(\iota_{i,j}^Z)}\vert_{ \mathrm{Im} (\H_*(f_i))} :  \mathrm{Im} (\H_*(f_i))\rightarrow \mathrm{Im} (\H_*(f_{j})) $$
and
$$ {\H^*(\iota_{i,j}^X)}\vert_{ \mathrm{Im} (\H^*(f_{j})) } : \mathrm{Im} (\H^*(f_{j})) \rightarrow \mathrm{Im} (\H^* (f_i)),$$
for all \(i \leq j\) have the same rank. This is true since 
\begin{align}
\rank {\H_*(\iota_{i,j}^Z)}\vert_{ \mathrm{Im} (\H_*(f_i))} &= \rank \left( \H_* (\iota_{i,j}^Z) \circ \H_*(f_i) \right) \nonumber\\
&= \rank \left(\H^* (f_i) \circ \H^*(\iota_{i,j}^Z)\right) \label{eq:rk2} \\
&= \rank \left(\H^* (\iota_{i,j}^X) \circ \H^*(f_{j})\right) \label{eq:rk3} \\
&= \rank {\H^*(\iota_{i,j}^X)}\vert_{ \mathrm{Im} (\H^*(f_{j})) } \nonumber
\end{align}
where the second equality \eqref{eq:rk2} is given by duality and the third equality \eqref{eq:rk3} is given by the commutativity of the diagram in absolute cohomology.
Since persistence barcodes are uniquely
determined by dimensions and ranks, we have shown that the image-persistent absolute homology and image-persistent cohomology barcodes are the same. The same proof applies to prove equality of the barcodes in the relative setting.
\end{proof}

As for equivalence between the absolute and relative settings, the arguments used in \cite{silva_dualities_2011} are not directly applicable for image-persistence. However, if \(\mathrm{Pers}_0\) denotes the finite intervals and \(\mathrm{Pers}_\infty\) the infinite intervals of a given persistence barcode,
we have the following correspondence. 

\begin{proposition}[\cite{bauer_efficient_2022}, Proposition 3.12]
    We have
    \[\Pers_0 (\Im \H_*(f_\bullet)) = \Pers_0 (\Im \H^{\ast +1}(f,f_\bullet)).\]
    Additionally, the map \(I \to T \setminus I\) defines the bijections
    \[\begin{array}{rcl}
        \Pers_\infty (\Im \H_*(f_\bullet)) & \cong & \Pers_\infty (\H^*(X,X_\bullet)),\\[2pt]
        \Pers_\infty (\Im \H^*(f,f_\bullet)) & \cong & \Pers_\infty( \H_*(Z_\bullet)).
    \end{array}\]
    \label{prop:bauer_schmahl}
\end{proposition}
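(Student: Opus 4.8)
The plan is to reduce each cohomological assertion to a homological one and then to feed the long exact sequence of the pair, made natural in $f$, into the lifespan-functor bookkeeping of \cite{bauer_lifespan_2021}. Because we work over a field, the Universal Coefficient Theorem makes cohomology the linear dual of homology, and the preceding proposition already gives $\Pers(\Im\H_\ast(f,f_\bullet)) = \Pers(\Im\H^\ast(f,f_\bullet))$ in every degree. Hence $\Pers_0(\Im\H^{\ast+1}(f,f_\bullet)) = \Pers_0(\Im\H_{\ast+1}(f,f_\bullet))$, and the first equality reduces to the purely homological claim $\Pers_0(\Im\H_\ast(f_\bullet)) = \Pers_0(\Im\H_{\ast+1}(f,f_\bullet))$. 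The two bijections concern essential classes and will be treated separately, again after passing from cohomology to homology via the preceding proposition.

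The device that produces the degree shift is the connecting homomorphism. Writing $\prt^X$ and $\prt^Z$ for the boundary maps of the long exact sequences of the pairs $(X,X_i)$ and $(Z,Z_i)$, naturality in $f$ gives $\prt^Z \circ \H_\ast(f,f_i) = \H_\ast(f_i) \circ \prt^X$, so a one-line diagram chase shows that $\prt^Z$ carries $\Im\H_{\ast+1}(f,f_i)$ into $\Im\H_\ast(f_i)$. Naturality of $\prt^Z$ with respect to the inclusions $Z_i \hookrightarrow Z_{i+1}$ and $(Z,Z_i)\hookrightarrow(Z,Z_{i+1})$ then upgrades these restrictions to a morphism of persistence modules $\prt : \Im\H_{\ast+1}(f,f_\bullet) \to \Im\H_\ast(f_\bullet)$. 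I would next compute its kernel and cokernel from exactness of the two sequences, intersected with the relevant images: the image of $\prt$ should consist of the classes of $\Im\H_\ast(f_\bullet)$ that eventually bound in $Z$, that is, exactly the non-essential (finite) part, while $\Ker\prt$ and $\Coker\prt$ collect the essential classes. Showing that $\prt$ restricts to an isomorphism of the finite parts --- equivalently that the ranks of the two families of structure maps agree once infinite bars are discarded --- yields the finite-interval equality with the degree shift built in.

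For the infinite intervals I would isolate the essential classes and reduce to the genuine absolute-to-relative duality of \cite{silva_dualities_2011} for a single filtered complex. By Remark \ref{rmk:relation_image_persistence_and_original_modules}, births in $\Im\H_\ast(f_\bullet)$ are births in $\H_\ast(X_\bullet)$ and deaths are deaths in $\H_\ast(Z_\bullet)$; an infinite bar never dies, so it is pinned down by the birth data of the domain, and one matches these essential classes with the infinite bars of the standard relative cohomology $\H^\ast(X,X_\bullet)$ through the duality of \cite{silva_dualities_2011}, the complementation $I\mapsto T\setminus I$ recording the reversal of filtration direction between the absolute and relative modules and the attendant swap of unbounded-above for unbounded-below intervals. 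The second bijection is obtained by the same scheme applied to $\Im\H^\ast(f,f_\bullet)$, whose essential classes are governed by the death data in $\H_\ast(Z_\bullet)$ and therefore match the infinite bars of the codomain's absolute homology.

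The hardest point is that forming images is not an exact operation: the long exact sequences hold for the full (co)homology of $X$ and of $Z$, but the subspaces $\Im\H_\ast(f_i)$ and $\Im\H_{\ast+1}(f,f_i)$ need not fit into an exact sequence of their own, so the barcode identities cannot be read off directly. They must instead be extracted through rank computations in the commutative ladder of long exact sequences, uniformly in $i$ --- precisely the structure that the lifespan functors of \cite{bauer_lifespan_2021} are designed to control. Verifying that $\prt$ has exactly the kernel and cokernel prescribed by the essential classes, and that the index conventions at the two ends of $T$ (namely $X_0 = Z_0 = \emptyset$, the possible appearance of index $0$ in the relative barcodes, and the identification of a top-index bar with $[b,\infty)$) are matched correctly under $I \mapsto T\setminus I$, is the crux; once this is in place, the separation into finite and infinite parts and the matching of ranks are routine.
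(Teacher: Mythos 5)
First, note what you are being compared against: the paper does not prove this proposition. It is imported verbatim, with citation, from \cite{bauer_efficient_2022} (their Proposition 3.12), which in turn rests on the lifespan-functor machinery of \cite{bauer_lifespan_2021}; the paper only explains how it is used. So the only question is whether your outline would close on its own. It would not, for two reasons. The lesser one is that you defer the crux yourself: you announce that identifying the image, kernel and cokernel of the connecting morphism $\prt$ with the mortal and immortal parts is ``the hardest point'' and leave it as a verification. Since taking images of morphisms of persistence modules is not an exact operation, that verification is where essentially all of the work lives, and as written your text is a plan rather than a proof.

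The more serious gap is a hypothesis that your key claims silently rely on and that the statement cannot do without: in \cite{bauer_efficient_2022}, and in the Ripser-image setting of Section \ref{subsec:implementation}, $X_\bullet$ and $Z_\bullet$ are two filtrations of the \emph{same} total complex, so that $\H_*(f) : \H_*(X) \to \H_*(Z)$ is an isomorphism. For a general injective $f : X \to Z$ the proposition is simply false: take $X$ a single vertex included in $Z$ a simplicial circle; then $\Im \H^1(f,f_\bullet) \subseteq \H^1(X,X_\bullet) = 0$ has no infinite intervals while $\H_1(Z_\bullet)$ has one, so the second bijection fails. The same hypothesis is exactly what makes your central claim about $\prt$ true. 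By naturality and exactness one gets $\prt\big(\Im \H_{*+1}(f,f_i)\big) = \H_*(f_i)\big(\Ker(\H_*(X_i)\to\H_*(X))\big)$, and this is contained in the mortal part $\Im \H_*(f_i) \cap \Ker(\H_*(Z_i)\to\H_*(Z))$; but the reverse inclusion requires that a class of $X_i$ whose image dies in $\H_*(Z)$ already dies in $\H_*(X)$, i.e., injectivity of $\H_*(f)$. Likewise, your count of essential classes of $\Im \H_*(f_\bullet)$ is $\rank \H_*(f)$, which matches $\dim \H_*(X) = \big|\Pers_\infty(\H^*(X,X_\bullet))\big|$ only under the same assumption. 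A correct proof must state and use this hypothesis at precisely the steps you gloss over; your sketch never invokes it.
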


This result means that in order to determine the barcode of \(\Im \H_*(f_\bullet)\), it suffices to compute $\Pers_\infty(\H^*(X,X_\bullet))$ and \(\Pers_0(\Im \H^*(f, f_\bullet))\). Both of these persistence diagrams may be computed applying a matrix reduction algorithm to appropriate boundary matrices. \cite{bauer_efficient_2022} also show that the clearing algorithm implemented in Ripser (see Section \ref{subsec:ripser}) can be applied to compute image-persistence. In this way, the code for Ripser can be fully adapted to this setting to achieve state-of-the-art computations for image-persistence. 

\subsection{Matching Intervals in Non-Morse Filtrations}

Ripser-image provides the barcode of the image-persistent homology for Vietoris--Rips filtrations. As noted previously in Remark \ref{rmk:morse_condition_matching}, this presents a significant obstacle to implement efficient interval matching using Ripser-image: Vietoris--Rips filtrations are not Morse filtrations. To overcome this limitation, we introduce a specialization of Definition \ref{def:cycle_matching_morse} that resolves the matches between bars with shared birth or death time. We first recall the definition of simplex-wise filtration. 

\begin{definition}
\label{def:simplexwise_filtration}
	A filtered complex \(\{X_i : i \in I\}\) is \emph{essential} if $i\neq j$ implies $X_i \neq X_j$. Additionally, it is a \emph{simplex-wise filtration} if for every $i\in I$ such that $X_i \neq \emptyset$ there is some simplex $\sigma_{i}$ and some index $j<i$, such that $X_i \smallsetminus X_j = \{\sigma_i\}$.
\end{definition}
Observe that in simplex-wise filtrations, there is a bijection between the indices of the filtration and the simplices of the complex \(X\).    
Consequently, the persistence pairs in the intervals of the barcode can be associated with particular simplices. We call the simplices corresponding to birth times \emph{positive simplices} and those corresponding to death times \emph{negative simplices}; we say that an interval is \emph{created} by its positive simplex and \emph{destroyed} by the negative simplex.

In addition, simplex-wise filtrations are Morse filtrations, which allows us to directly apply the definition of interval matching proposed by \cite{reani_cycle_2021} (Definition \ref{def:cycle_matching_morse}): given the correspondence between birth (resp.~death) times and positive (resp.~negative) simplices, we can rephrase Definition \ref{def:cycle_matching_morse} in the following manner.
Let $X, Y, Z$ be finite simplicial complexes with \emph{simplex-wise} filtrations \(\{X_i: i \in I\}\), \(\{Y_i: i \in I\}\), and \(\{Z_i:i \in I\}\). Assume we have injective maps \(f:X \to Z\) and \(g: Y \to Z\) with the usual notation for the restrictions
\[f_i : X_i \to Z_i, \qquad g_i: Y_i \to Z_i,\]
for every \(i \in I\).

\begin{definition}[Interval matching for simplex-wise filtrations]
	\label{def:general_cycle_matching}
	Let $\alpha \in \Pers(\H_*(X_\bullet))$ and $\beta \in \Pers(\H_*(Y_\bullet))$. The intervals $\alpha$ and $\beta$ are \emph{matching intervals via $Z_\bullet$}, if there exist $\tilde{\alpha} \in \Pers(\Im \H_*(f_\bullet))$ and $\tilde{\beta} \in \Pers(\Im \H_*( g_\bullet))$ such that the following conditions are satisfied:
	\begin{itemize}
		\item \(\alpha\) and \(\tilde{\alpha}\) are created by the same simplex (seen in \(X\) and in \(f(X)\), respectively);
  		\item \(\beta\) and \(\tilde{\beta}\) are created by the same simplex (seen in \(Y\) and in \(g(Y)\), respectively);
    	\item \(\tilde{\alpha}\) and \(\tilde{\beta}\) are destroyed by the same simplex in \(Z\).
	\end{itemize}
\end{definition}

In Definition \ref{def:general_cycle_matching}, the use of the phrase ``by the same simplex'' in relation to intervals \(\alpha\) and \(\tilde{\alpha}\) means that if \(\sigma\) is the positive simplex associated to \(\alpha\), then \(f(\sigma)\) is the positive simplex associated to \(\tilde{\alpha}\); this meaning also applies to the bars \(\beta\) and \(\tilde{\beta}\) with their positive simplices connected by the function \(g\). Notice that this association is well defined since both \(f\) and \(g\) are assumed to be injective. Similarly, the notion of \(\tilde{\alpha}\) and \(\tilde{\beta}\) being destroyed \emph{by the same simplex} means that the negative simplices associated to these intervals are precisely represented by a single simplex within the larger complex \(Z\).

Again, many constructions of filtered complexes do not comply with Definition \ref{def:simplexwise_filtration}, in particular, Vietoris--Rips filtrations are not simplex-wise in general. However, for any filtered complex we can always find a re-indexing that refines the filtration and turns it into an essential simplex-wise filtration. This can be done by finding a partial ordering of the simplices in each of the steps of the original filtration that extends to a total ordering in the whole complex. For instance, Ripser uses a lexicographic refinement of the Vietoris--Rips filtration, which orders the simplices by dimension, diameter and a combinatorial system (see \cite{bauer_ripser_2021} for further detail). 

Consequently, restricting the matching to simplex-wise filtrations does not introduce any additional difficulties; rather, it enables the application of cycle matching to general filtrations. The process described above is a standard procedure in persistent homology solvers (i.e., algorithms that compute persistent homology barcodes), which compute the barcode for the refined simplex-wised filtration and then retrieve the barcode of the original filtration by relabelling the endpoints of the intervals as explained in Section \ref{subsec:standard_persistence_modules}. This process can be extended to interval matching in a similar manner: given a general filtration, we refine it to a simplex-wise filtration, compute the interval matching following Definition \ref{def:general_cycle_matching}, and then recover the interval matching for the original filtration. This approach addresses a gap in the work of \cite{reani_cycle_2021} and thus makes their ideas more general and widely applicable, increasing their practical utility.

\begin{remark}
    Notice that to implement the interval matching in Definition \ref{def:general_cycle_matching} for general filtrations, we need to further assume that underlying simplex-wise refinements are compatible in the three filtrations. This means that the simplices are added to the persistence modules \(\H_*(X_\bullet)\) and \(\H_*(Y_\bullet)\) and to their image-persistence modules \(\Im \H_*(f_\bullet)\) and \(\Im \H_*(g_\bullet)\) in the same order. This can always be achieved by first setting the orders in \(X\) and \(Y\) and then in \(Z\) accordingly.
\end{remark}

\subsection{Implementing Cycle Matching with Ripser-Image}
\label{subsec:implementation}

The input to Ripser-image consists of two Vietoris--Rips filtrations 
\[X_\bullet = \VR_\bullet(\mathcal{P}, d) \quad \text{and} \quad Z_\bullet = \VR_\bullet(\mathcal{P}, d'),\]
where the two metrics \(d, d'\) on a finite set \(\mathcal{P}\) satisfy \(d(p,q) \geq d'(p,q)\) for all \(p, q \in \mathcal{P}\). However, the setting for interval matching is slightly different. 

From section \ref{subsec:image_persistence}, to implement interval matching on finite point clouds \(\mathcal{X}\) and \(\mathcal{Y}\) living in the same ambient space, we can consider their union \(\mathcal{P} = \mathcal{X} \cup \mathcal{Y}\) and any metric \(d'\) on \(\mathcal{P}\) induced from that ambient space. This will induce metrics \(d_X := d'\vert_{\mathcal{X}} \) and \(d_Y :=d'\vert_{\mathcal{Y}}\) on the smaller point clouds as well. Consider the extension
\[(\mathcal{X}, d_X) \subset (\mathcal{P}, d_X')\]
such that the metric \(d_X'\) is obtained by setting a very large distance between any point in \(\mathcal{X}\) and any point in \(\mathcal{Y}\), and any pair of points in \(\mathcal{Y}\), all seen in the union. Then, up to a threshold corresponding to that large distance and up to the points in \(\mathcal{P}\setminus \mathcal{X} = \mathcal{Y}\), we have
\[X_\bullet = \VR_\bullet (\mathcal{X}, d_X) \simeq \VR_\bullet (\mathcal{P}, d_X')\]
which puts us in the setting of Ripser-image. Notice that in the matrix representation of \(d_X\) there are rows and columns corresponding to points in \(\mathcal{Y}\). This construction can be also applied to \((\mathcal{Y}, d_Y)\), preserving the same order as before in rows and columns to ensure compatibility. In this manner, we obtain the three Vietoris--Rips filtrations
\[X_\bullet \subset Z_\bullet \supset Y_\bullet,\]
and we consider the inclusions as the connecting functions for the matching.

The code for Ripser and Ripser-image assigns a unique index to any simplex of the input filtered complex using a lexicographic refinement. However, it does not provide the indices associated to the positive and negative simplices of the persistence intervals of the barcode in its output. These values can be readily retrieved with a slight change the original code. By arranging the matrices representing the finite metric spaces as explained above, these indices allow for the implementation of Definition \ref{def:general_cycle_matching} without affecting the computational runtime of both programs. 

It is crucial to observe at this point that a change in the order of the columns and rows within the distance matrices used as input for Ripser-image, which are in correspondence with the points of \(\mathcal{X}\) and \(\mathcal{Y}\) in the setting described above, will alter the indices assigned to simplices through the lexicographic refinement. This could, in principle, alter the outcome of the interval matching using Ripser-image as described thus far. Nonetheless, as long as we are consistent with the ordering in all of the three matrices involved in the computations, this observation poses no problem for the implementation of the interval matching. In our code, the points in \(\mathcal{X}\) occupy the foremost positions, preserving the same order in all three matrices, and the points in \(\mathcal{Y}\) assume the terminal positions, also preserving their order across matrices.

\paragraph{A Note on Terminology: Cycle Matching.} 

\cite{reani_cycle_2021} refer to this framework of matching intervals in persistent homology as \textit{cycle registration}. In this paper, whenever we use the term ``cycle matching,'' we assume that there is a way of finding cycles in the final simplicial complex of the filtration that correspond to the intervals in its barcode. Note that, in general, these representative cycles are not unique---in fact, the persistence intervals are associated to homology classes, i.e., equivalence classes of cycles. However, there are methods to uniquely determine the representative cycles. One such method considers the columns of the reduced boundary matrix corresponding to the killing simplices, which provides the simplices that make up a cycle killed by that simplex.

Further on in section \ref{sec:applications}, we implement cycle matching by using the version \texttt{ripser-tight-representative-cycles} of Ripser. This feature provides the representative cycles corresponding to the intervals in the barcode computed by Ripser. Note that Ripser does not reduce the boundary matrix but the relative coboundary matrix (see the previous discussion from section \ref{subsec:ripser}), and thus we cannot implement the aforementioned method directly. However, \cite{cufar_fast_2021} develop an adaptation of this idea to obtain state-of-the-art computations of barcodes and representatives. The method proposed in \cite{cufar_fast_2021} uses persistent cohomology to obtain the persistence pairs and
reduces the boundary matrix only using the columns corresponding to death indices. This is the technique that \texttt{ripser-tight-representative-cycles} implements.

\paragraph{Processing Multiple Jobs in Parallel.}

A computational advantage of the bootstrapping approach proposed by \cite{reani_cycle_2021}
is that this technique is parallelizable, despite the inherently non-parallelizable nature of persistent homology computations. Recall that once the barcode of the reference sample is computed, to obtain the prevalence score of its intervals, these intervals are matched with the intervals in the barcodes of \(K\)-many resamplings. These matchings can be processed in parallel jobs using a high performance computer cluster (HPC) with a workload manager and job scheduling system, such as SLURM or OpenPBS. In each of these jobs, first, the barcode of the corresponding resampling and the barcodes of the image-persistence modules involved are computed, then the generalized interval matching is implemented. This allows for a dramatic increase in efficiency in the computational runtime, with respect to a sequential execution of the code: 
the total computational time corresponds to the one of the slowest job,
instead of the sum of the computational times of the individual jobs.

\subsection{Revisiting Matching Affinity}
\label{sec:revisit_affinity}

The matching affinity introduced in Definition \ref{def:affinity} relies on a particular choice of pairs of intervals to compare through their Jaccard index. In principle, other selections are also valid to obtain different definitions of the matching affinity. We now study the behavior of four such affinities in the example of two circles with same radius but diverging centers. This will allow us to conclude that only one of these definitions exhibits a significant difference with respect to the others. 

From now on, we refer to the matching affinity of Definition \ref{def:affinity} as \emph{matching affinity $A$}
	\[\rho_A(\alpha,\beta) := \mathrm{Jac}(\alpha,\beta) \cdot \mathrm{Jac}(\alpha, \tilde{\alpha})\cdot \mathrm{Jac}(\beta,\tilde{\beta}),\]
where \(\alpha, \beta\) denote two bars matched through their image-bars \(\tilde{\alpha}, \tilde{\beta}\). This score involves the comparison of \(\alpha\) and \(\beta\) but also of each bar with its corresponding image-bar. Considering multiple ways to compare persistence bars and image-bars, we also have:
\begin{itemize}
    \item the \emph{matching affinity $B$} as \[\rho_B(\alpha,\beta) :=  \mathrm{Jac}(\Tilde{\alpha},\Tilde{\beta}) \cdot \mathrm{Jac}(\alpha, \tilde{\alpha})\cdot \mathrm{Jac}(\beta,\tilde{\beta});\]
    \item the \emph{matching affinity $C$} as \[\rho_C(\alpha,\beta) := \mathrm{Jac}(\alpha,\beta) \cdot\mathrm{Jac}(\Tilde{\alpha},\Tilde{\beta}) \cdot \mathrm{Jac}(\alpha, \tilde{\alpha})\cdot \mathrm{Jac}(\beta,\tilde{\beta}),\]
    \item the \emph{matching affinity $D$} as \[\rho_D(\alpha,\beta) := \mathrm{Jac}(\alpha,\beta) \cdot \mathrm{Jac}(\tilde{\alpha}, \tilde{\beta}).\]
\end{itemize}

The following concrete example provides an intuition on how the different affinities behave. Consider two circles of radius \(1\) and centers shifted by a distance \(s\). We expect that the matching affinity decreases as the center-to-center distance \(s\) increases, until reaching $0$ (i.e., no match) beyond a certain value. The result of this experiment is displayed in Figure \ref{fig:comparison_affinities}, where we see that all matching affinities decrease with respect to $s$ and that the cutoff value is $1$. Affinities $A, B,$ and $C$ follow very similar decreasing behaviors in a linear fashion, whereas affinity $D$ has a distinct plateau-like behavior. We now further investigate this phenomenon.

Assume that we have two persistence bars \(\alpha\) and \(\beta\) matched via their image-bars \(\tilde{\alpha}\) and \(\tilde{\beta}\). We know that the birth times of \(\alpha\) and \(\tilde{\alpha}\), and \(\beta\) and \(\tilde{\beta}\) coincide, respectively, and that the bars \(\tilde{\alpha}\) and \(\tilde{\beta}\) also have the same death time. Thus, having high affinity $A$, for instance, depends on the two following phenomena. Firstly, the bars \(\alpha\) and \(\beta\) must have similar birth and death times. This means that the cycles in \(X\) and \(Y\) that generated them should be similar in size. Secondly, the death time of the image-bars should also be similar to the death time of the original bars. Geometrically, this means that the cycles that generated the bars should have high overlapping surface when considered in the union of the point clouds. These ideas are illustrated in Figure \ref{fig:3_matches}, where the affinity $A$ of a match of two circles decreases significantly when the circles have different centers or radii.

Returning to Figure \ref{fig:comparison_affinities} with these ideas in mind, we see that there are few noticeable but not fundamental differences between the affinities $A, B,$ and $C$. Indeed, the Jaccard indices \(\mathrm{Jac}(\alpha, \beta)\) and \(\mathrm{Jac}(\tilde{\alpha}, \tilde{\beta})\) have similar magnitude---if a pair of cycles are similar in size in the spaces \(X\) and \(Y\), the cycles corresponding to their image-bars in the union will also have similar sizes. This implies that the affinities $A$ and $B$ are very similar. Affinity \(C\) is slightly lower than affinities \(A\) and \(B\) only because it has an extra multiplicative factor with magnitude less than one. 

It also makes sense that the matching affinities $A, B,$ and $C$ drop when the spatial overlap between cycles decreases. This happens because these affinities include the Jaccard indices between bars and image-bars, which are influenced by this overlap. The matching affinity \(D\) does not consider such a comparison, and thus, remains at a higher value until there is no match anymore, when it drops abruptly. Such a behavior could be useful in certain situations. In real-life applications, it might happen that we are interested in matching topological features that shrink or enlarge significantly, or which appear misplaced in the samples. Matching affinity $D$ would then be more sensitive to these matches, by assigning a higher prevalence score to them. However, this feature could be undesirable in other contexts, as we discuss in the next observation.

We now need to check whether the four affinities are consistent with the original motivation, which is the condition proposed in \cite{reani_cycle_2021}. Namely, random cycles that appear in resamplings and get matched several times should be assigned a low prevalence score. Similar to \cite{reani_cycle_2021}, we consider a uniform sampling of the unit square with \(N_{\mathrm{ref}} = 1000\) points and compute the prevalence scores of its bars by finding matches with \(K = 20\) different resamplings of \(N =1000\) points from that same distribution. The results of this experiment are given in Figure \ref{fig:random_cycles}. For affinity $D$, some random cycles are assigned quite high prevalence scores in the range 0.6--0.7. This must be taken into account when interpreting the affinity scores for applications using affinity $D$.

\section{Applications}
\label{sec:applications}

In this section, we demonstrate with numerous examples that cycle matching can be applied to real-life, large-scale, complex datasets from biology and astrophysics. 
Several applications motivate the usage of cycle matching and prevalence.
First, we can identify common topological features shared by two spaces as a direct application of cycle matching.
We can use this to track features both spatially and over time, on consecutive slices of an object or on consecutive time frames.
We demonstrate both of these applications in this section.

Second, the most prevalent features in data can be identifed by applying cycle matching repeatedly after resampling from the same distribution.  By doing this, we can detect prevalent cycles in large-scale  and complex data.  Here prevalence is computed using Affinity A exclusively. 
 We demonstrate this on cosmic web data and cell actin network data.
Prevalence gives rise to an enriched visualization via the \emph{prevalence-augmented barcode}, where length corresponds to persistence while thickness and color both correspond to prevalence.

\paragraph{Comparison with Classical Computer Vision Tools for Feature Tracking.} It is important to note at this point that the application we are suggesting here differs from existing algorithms for tracking features in computer vision, such as those implemented in OpenCV \citep{opencv_library}. These algorithms take images of an object of interest as input and use different methods to track the object through subsequent frames, identifying an area in the frame in which the object is located. The technique we are proposing fundamentally differs from the procedure above in four aspects. First, we track \emph{topological features}, which are structures that are inherently different from objects and object locations in images. 
Moreover, we do not need any prior knowledge of what to track: persistent homology directly detects the topological features of the image, and our only input is the video or stack of images where the topological features are present. 
In addition, our output is the chain of features from the persistent homology of the images that are matched in subsequent frames, which we can represent through cycle representatives as explained above, but which is not in principle related to any specific area in the image, as is the case of the output of the feature tracking algorithms.
Lastly, notice that we can track several features concurrently, without needing to re-run the whole algorithm, which is what usually happens in feature tracking algorithms in computer vision.

\subsection{Tunneling: Tracking Intervals Over Slices}

\label{sec:track_slices}

As a first application of cycle matching, we tracked intervals over two-dimensional slices of three-dimensional objects.
In biomedical imaging, for instance, it is common that data are made up of spherical or tubular elements, such as in vessels and other biological organs with channeling functions. Using cycle matching, we can match the closed contours delimiting the spherical or tubular elements across slices.

\paragraph{Data: Lateral Line in Zebrafish.}

To demonstrate this application, we used a biological imaging dataset, in particular the dataset with image ID 9836972 provided in \cite{10.7554/eLife.55913}. This is a stack of two-dimensional confocal images from the zebrafish posterior lateral line primordium (pLLP). The pLLP is a primitive expression of the lateral line---an organ in fish that allows them to detect the pattern of water flow over their body surface. It appears at the embryonic stage in the form of a rosette-shaped cluster of cells. The circular contours that we can see in the images of the dataset (see Figure \ref{fig:2D_slices}) are precisely these cells; we will track these along the height of the stack.

We considered a stack of \(15\) images with a \(0.66\ \mu m\) gap and size of \(300 \times 300\) pixels in each image, with a resolution of 0.1 \(\mu m\) per pixel. We thresholded the images with the Otsu method and uniformly sampled them with \(N = 1000\) points. We matched persistent intervals between pairs of Vietoris--Rips filtrations on consecutive slices. Some features were matched on consecutive frames and formed a tunnel: we were able to detect $32$ such tunnels, each stained with a different color. We computed the geometric generators drawn here to represent the persistence intervals that were matched using the \texttt{ripser-tight-representative-cycles} module of Ripser \citep{bauer_ripser_2021}. The results are shown in Figure \ref{fig:2D_slices}. As a byproduct of this approach, we can identify slices on which a cell appears then disappears.

\subsection{Video Data: Tracking Features Over Time}

\label{sec:track_video}

Cycle matching can be used to track topological features over time, by matching the barcodes of consecutive frames in a video, or, in a biological context, at different stages of disease development. This method detects common topological patterns surviving across consecutive time points and quantifies the quality of the match through the affinity scores.

\paragraph{Data: Heart Valves in Zebrafish.}

To illustrate this application, we analyzed a video of the atrioventricular valve (AVV) of a wild-type AB zebrafish, from \cite{scherz_developing_hearts_2008}.
This video is taken $76$ hours post fecundation and at a rate of $50$ milliseconds.
The specimen studied comes from a transgenic line that allows the monitoring of the two chambers that make up the primitive heart of the zebrafish embryo, and how its contraction over time generates embryonic heartbeats. The contraction is especially pronounced for the right chamber, as can be seen from Figure \ref{fig:heartbeat}.

We selected $10$ frames capturing one contraction and matched cycles on consecutive frames (Figure \ref{fig:heartbeat}). We sampled \( N = 500\) points on each of the images after applying a thresholding technique based on the mean of gray-scale values.
We successfully detected the persistent intervals delimiting the two chambers, and tracked them on all consecutive frames. We also tracked their size variation. Note that the matching affinities are much more variable for the cycle on the right (in red), which is expected since the right chamber changes abruptly in shape. As before, we show on each frame of Figure \ref{fig:heartbeat} the generator associated to each persistence interval, obtained with the ripser-tight-representative-cycles module of Ripser, while using the same color to stain matched cycles.

\paragraph{Data: Time-Lapse Images of Human Embryos.}

As another example to demonstrate feature tracking over time, we tracked intervals on $10$ consecutive frames of time-lapse embryo data from \cite{GOMEZ2022108258} (Figure \ref{fig:embryogenesis}). A time-lapse imaging (TLI) system with a special camera captures images of a human embryo every 50 to 100 minutes and features different stages of cellular division. We matched samples of \(N =500\) points on the images after applying a Sato operator and a threshold using the Otsu method. We were able in particular to detect cell division as the appearance of a new topological feature (in red).

\subsection{Prevalent Cycles}

\label{sec:appli_prevalent}

Our next application is to find prevalent cycles in order to reveal significantly organized topological patterns in data.  We will demonstrate this on cosmic web data (Figure \ref{fig:cosmic}) and cell actin data. 
Recall that this consists of comparing multiple resamplings $X^{(1)},\ldots,X^{(K)}$ to a reference space $X_{\rm ref}$ and finding all possible matching pairs of persistence intervals between $X_{\rm ref}$ and any $X^{(k)}$ for \(1\leq k\leq K\). 

This approach becomes especially useful in situations where the initial (unknown) distribution has high topological complexity, and for which we only have access to point cloud samplings. In other situations, the distribution may be partially or even entirely known already---for instance if we are given an image from which to sample points. Oftentimes, an image may suffer from noise or contrast variations, but we can recover the true cycles of the object. Even in the absence of noise, brighter and weaker signals may still capture interesting information (for instance, depth in 2D images)---prevalence takes this into account. We can also study the profile of prevalence scores corresponding to an image to characterize its topological structure in comparison to another image. We may visualize this using prevalence-augmented persistence barcodes, where the length of a bar still represents persistence in the usual sense, while its thickness and color represents prevalence.

\paragraph{Data: The Cosmic Web.}

First, we identified prevalent cycles in the cosmic web, based on the point distribution of galaxies from the BOSS CMASS database \citep{dawson_baryon_2013}. Matter in the universe is arranged along an intricate pattern involving filamentary structures  \citep{de_lapparent_slice_1986, york_sloan_2000}. However, the reconstruction of these filaments is still challenging; multiple methods to address this reconstruction problem have been proposed \citep{malavasi_characterising_2020}. Instead of detecting filaments with an uncertainty score \citep[e.g.,][]{duque_novel_2022}, we propose to detect cycles with a prevalence score.

The final version of the BOSS CMASS dataset used here is included in the current data release SDSS DR17 \citep{abdurrouf_seventeenth_2022} and was released in SDSS DR12 \citep{alam_eleventh_2015}. We selected galaxies with right ascension $170 < \mathrm{RA} < 190$, declination $30 < \mathrm{dec} < 50$ and redshift range $0.564 < z < 0.57$ and projected the points onto the $(\mathrm{RA},\, \mathrm{dec})$ 2D space.

We sampled the reference space $X_{\mathrm{ref}}$ using $N_\mathrm{ref} = 1000$ points and resampled the dataset $K = 20$ times with $300$ points in each resampling $X^{(k)}$, by adding Gaussian noise of magnitude $0.1$ to each point. The noise scale, required for the prevalence score bootstrapping technique, is determined based on both the data magnitude and the width of the filaments present in the original sample that we want to detect. We performed $20$ comparisons of barcodes to find matching cycles.
The results are shown in Figure \ref{fig:cosmic}, where cycles with different prevalence scores can be visualized.

\paragraph{Data: Cell Actin.}

Next, we computed the prevalence of cycles in biological imaging data of cell actin. Actin networks are essential in scaffolding the inner structure of cells, enabling in particular cell motility and reshaping. Figure \ref{fig:actin_data}, featuring data from \cite{svitkina_arp23_1999}, shows significant loss of actin filaments in the rear of the lamellipodium network due to the absence of some stabilizing chemicals during extraction. We selected three crops I, II and III, to study from this image, where the actin filaments are sparse, half-sparse and half-dense, and dense, respectively. We then thresholded each cropped image using the Otsu thresholding method to segment filaments, and restricted the original image pixel intensities to these filaments to finally obtain a discrete probability distribution. Points were sampled from this distribution and their spatial coordinates were perturbed with a Gaussian noise of standard deviation $10\%$ of a pixel side.

We show the results of our computations in Figures \ref{fig:actin_stained_cycles}, \ref{fig:actin_overlayed_cycles}, \ref{fig:actin_barcodes}, \ref{fig:actin_prevalence_barcodes},
\ref{fig:actin_scores}, and
\ref{fig:actin_persist_vs_preval}.
We found that larger voids in the structure of the actin mesh led to larger cycles of higher prevalence, such as in crop I, whereas small voids in denser parts led to smaller cycles of lower prevalence, such as in crop III. As seen in Figures \ref{fig:actin_stained_cycles} and \ref{fig:actin_overlayed_cycles}, we correctly identified large cycle representatives in crop I, small ones in crop III, and a mix of small and large ones in crop II, at the transitional region between rear and front of the lamellipodium. The usual persistence barcodes (Figure \ref{fig:actin_barcodes}) show numerous short-lived bars in crop III, but some longer-lived features for crops I and II. This barcode information can be enriched by visualizing the prevalence as the thickness and color of a bar (Figure \ref{fig:actin_prevalence_barcodes}). It is interesting to note that the highest prevalence scores throughout selected data were found in crop II, corresponding to two highly prevalent cycles (see Figure \ref{fig:actin_scores}). Their scores are higher than those from crop I, due to contrast variations of larger amplitude along filaments of crop II. Indeed, prevalence can be interpreted as a certainty measure of topological features.
Finally, a scatter plot of prevalence versus persistence scores (Figure \ref{fig:actin_persist_vs_preval}) confirmed that prevalence is not monotonous with respect to persistence and longer intervals do not necessarily correspond to more prevalent features.

\subsection{A Note on Computational Runtime}

As mentioned previously in section \ref{subsec:implementation}, the advantage of the framework proposed by \cite{reani_cycle_2021} is that the procedure of matching is easily parallelizable. With access to standard institutional high performance computing (HPC) resources requiring simply CPU processing, use of a single node, one CPU per task, and max 30 GB of memory per CPU, the problem of identifying prevalent cycles or matching intervals in spaces with a range of 100--1000 points generally reduces to a matter of minutes, ranging from seconds to a few hours for the applications showcased here.

We present in Table \ref{tab:runtime_tracking} the runtimes corresponding to the real datasets from Section \ref{sec:track_slices} and Section \ref{sec:track_video}, where we track topological features over a set of frames. We include the number of points in the samples \(N\) and the number of subsamples \(K\). In Table \ref{tab:runtime_real}, we exhibit the runtimes associated to the real datasets shown in Section \ref{sec:appli_prevalent}, where we compute prevalent features. We include the number of points on the reference space \(N_\mathrm{ref}\), the number of points in the resamples \(N\), and the number of resamples \(K\). Table \ref{tab:runtime_synth} collects runtimes for computing prevalent features on synthetic datasets that consist of point clouds uniformly sampled in the unit square of the plane. Note that we took $N_\mathrm{ref} = N$ in the synthetic examples. 

In Table \ref{tab:runtime_tracking}, one runtime corresponds to the computation of 
\begin{enumerate}
\item the barcodes of the two samplings on consecutive frames;
\item the two image-barcodes of those samplings in their union; and
\item the matching itself, which compares the barcodes. 
\end{enumerate}

In Table \ref{tab:runtime_real} and Table \ref{tab:runtime_synth}, step (i) only covers the computation of the barcode on the resampling \(X^{(k)}\) corresponding to that job, and in step (ii), we compute the image-persistence of the reference space \(X_\mathrm{ref}\) and the resampling \(X^{(k)}\) in their union. The runtime needed to compute the barcode of the reference space $X_\mathrm{ref}$, which is just a few seconds and is computed once before the parallel jobs, is not included in Tables \ref{tab:runtime_real} and \ref{tab:runtime_synth}. 

The computational bottleneck here is not the number of matchings $K$ but the number of points $N$ and $N_\mathrm{ref}$ sampled in the respective spaces for each matching instead. The number of matchings could be increased arbitrarily (up to the capacity of the HPC)
without affecting the computational runtime while maintaining it on the order of minutes or a few hours. Increasing it here allows for more precise estimations of, for instance, the average runtime needed to find all possible matchings between two spaces.
However, median runtime increases following a power law $T \sim \mathrm{cst} \cdot N^p$ where $\mathrm{cst}$ is a constant and $p \simeq 3.266$ (see Figure \ref{fig:runtime_regression}), so that increasing the number of sampled points $N$ by a factor of $10$ would result in increasing runtime by a factor of $10^p \simeq 1845$.

\begin{table}[!ht]
    \centering
    \begin{tabular}{|c|c|c|c|c|c|c|c|}
    \hline
        \multirow{2}{*}{Dataset} & \multicolumn{4}{|c|}{Runtime (minutes)} & \multicolumn{2}{|c|}{Parameters} \\ \cline{2-7}
        & min & max & average & median & $N$  & $K$ \\ \hline
        Lateral Line  & 31 & 281 & 124 & 95 & 1000 & 14 \\ \hline
        Heart Valves & 3 & 21 & 11 & 8 & 500 & 9\\ \hline
        Time-lapse Embryo & 5 & 31 & 15 & 15 & 500 & 9\\ \hline
    \end{tabular}
    \caption{Computational Runtimes of Tracking Experiments on Real Data.}
    \label{tab:runtime_tracking}
\end{table}

\begin{table}[!ht]
    \centering
    \begin{tabular}{|c|c|c|c|c|c|c|c|}
    \hline
        \multirow{2}{*}{Dataset} & \multicolumn{4}{|c|}{Runtime (minutes)} & \multicolumn{3}{|c|}{Parameters} \\ \cline{2-8}
        & min & max & average & median & $N_\mathrm{ref}$ & $N$ & $K$ \\ \hline
        Actin Crop I & 83 & 360 & 184 & 178 & 1200 & 500 & 30 \\
        Actin Crop II & 58 & 377 & 146 & 118 & 1200 & 500 & 30 \\ 
        Actin Crop III & 65 & 310 & 164 & 172 & 1200 & 500 & 30 \\ \hline
        Cosmic Web & 8 & 29 & 15 & 14 & 1000 & 300 & 20 \\ \hline
    \end{tabular}
    \caption{Computational Runtimes of Prevalence Experiments on Real Data.}
    \label{tab:runtime_real}
\end{table}

\begin{table}[!ht]
    \centering
    \begin{tabular}{|c|c|c|c|c|c|c|c|}
    \hline
        \multirow{2}{*}{Dataset} & \multicolumn{4}{|c|}{Runtime (seconds or minutes)} & \multicolumn{3}{|c|}{Parameters} \\ \cline{2-8}
        & min & max & average & median & $N_\mathrm{ref}$ & $N$ & $K$ \\ \hline
        Synth. 1 & 1.81 s & 5.61 s & 3.43 s & 3.34 s & 100 & 100 & 30 \\
        Synth. 2 & 8.99 s & 48.32 s & 23.47 s & 20.17 s & 200 & 200 & 30 \\ 
        Synth. 3 & 25.68 s & 2 mn 44 s & 1 mn 25 s & 1 mn 18 s & 300 & 300 & 30 \\
        Synth. 4 & 3 mn 13 s & 15 mn 31 s & 8 mn 12 s & 8 mn 5 s & 500 & 500 & 30 \\
        Synth. 5 & 13 mn 19 s & 88 mn 54 s & 46 mn 18 s & 39 mn 48 s & 800 & 800 & 30 \\
        Synth. 6 & 38 mn 41 s & 189 mn 33 s & 95 mn 16 s & 92 mn 10 s & 1000 & 1000 & 30 \\ \hline
    \end{tabular}
    \caption{Computational Runtimes of Prevalence Experiments on Synthetic Data. Median runtime increases as a power law $T \sim \mathrm{cst} \cdot N^{3.266}$ with respect to $N$ (see Figure \ref{fig:runtime_regression}).}
    \label{tab:runtime_synth}
\end{table}

\subsection*{Software and Data Availability}

The code used to perform all experiments here is freely and publicly available at our project GitHub repository \url{https://github.com/inesgare/interval-matching}.  It is fully adaptable for individual user customization.  

Where possible, the data we have used in this section are also provided on the same GitHub repository so that all experiments and examples in our paper are fully reproducible.  Note that some of the data we have used here required an institutional materials transfer agreement, so these data were not made available on our repository.

\section{Discussion}
\label{sec:end}

In this paper, we studied the problem of identifying topologically significant features in noisy data where the usual measure of persistence in the sense of the length of an interval in a persistence barcode is unsatisfactory.  We also studied the problem of comparing barcodes over different filtrations and identifying correspondences between persistence intervals.  To date, the various existing proposals to these problems have faced significant computational limitations.  The main contribution of our work is an extension of existing notions of topological significance and cycle matching to provide the most general and flexible definitions, which we then implement using the dual perspective of cohomology to achieve the fastest available identification of prevalent cycles as well as cycle matching. Our implementation now makes these approaches practical and applicable to real-life, large-scale, complex datasets with execution times ranging from a matter of minutes to a few hours for 100--1000 sampled points, 
using only standard institutional HPC facilities.
Our work inspires several directions for future research, which we now discuss.  

First, one natural question is to understand the behavior of the prevalence-augmented barcodes as the number of resampling spaces $K$ and the number of sampling points $N$ for each space increase to infinity. It will be important to understand how at the limit augmented barcodes are related to the original distribution, quantify the rate and type of convergence, and study how the choice of filtration (Rips, \v{C}ech, coupled-Alpha \citep{reani_coupled_2021}) and of affinity (A, B, C, D) affects the convergence, if at all. This would then allow us to design probabilistically-founded statistical tests, for example, to determine whether a collection of point clouds has been sampled from a specific distribution with known barcode, or not, based on some confidence intervals.
Developing such a framework would provide a practical approach to choosing a threshold to identify ``true'' cycles in the original data as bars whose prevalence are higher than $1 - \epsilon$.  This could also have practical implications on certain applications, for example, by directly extracting ``true'' topological signal in complex settings such as imaging data, and perhaps even bypassing the need for computationally expensive procedures, such as image segmentation.

Another theoretical question of interest is the study of the new metric $d_{\mathrm{IM}_p}$ on persistence modules introduced by \cite{reani_cycle_2021} in section 6.2. This metric involves comparing matched intervals between two modules directly, and not comparing birth--death times between persistence diagrams which discards important information about spatial correspondence, as explained previously. It is reasonable to expect that as $N$ increases, the distance between two persistence modules converges to zero if the point clouds are drawn from the same distribution. Likewise, this could be an alternative approach towards the design of a (non-parametric)
statistical test to determine whether two points clouds were sampled from a same distribution.

\section*{Acknowledgments}

We wish to thank Omer Bobrowski, Dominique Bonnet, Vin de Silva, Bernhard Kainz, Yohai Reani, Wojciech Reise, Marc Glisse, and Iris Yoon for helpful conversations.  We also wish to thank two anonymous referees for their insightful comments and help with improving the paper.

I.G.R.~is funded by a London School of Geometry and Number Theory--Imperial College London PhD studentship, which is supported by the Engineering and Physical Sciences Research Council [EP/S021590/1], EPSRC Centre for Doctoral Training in Geometry and Number Theory (The London School of Geometry and Number Theory), University College London.  A.S.~is funded by a joint Imperial College London--Francis Crick Institute PhD studentship.

We wish to acknowledge The Francis Crick Institute, London, and the Information and Communication Technologies resources at Imperial College London for their computing resources which were used to implement the experiments and data applications in this paper.

We would like to thank the projects involved in \cite{10.7554/eLife.55913}, \cite{scherz_developing_hearts_2008} and \cite{GOMEZ2022108258} for these datasets and their public accessibility.

We are grateful to the contributors of the Cell Image Library (CIL) \citep{ellisman_cell_2021}, located at \url{http://www.cellimagelibrary.org}, for making their data repository and resources available for public access.

Funding for SDSS-III has been provided by the Alfred P.~Sloan Foundation, the Participating Institutions, the National Science Foundation, and the U.S. Department of Energy Office of Science. The SDSS-III web site is \url{http://www.sdss3.org/}.

SDSS-III is managed by the Astrophysical Research Consortium for the Participating Institutions of the SDSS-III Collaboration including the University of Arizona, the Brazilian Participation Group, Brookhaven National Laboratory, Carnegie Mellon University, University of Florida, the French Participation Group, the German Participation Group, Harvard University, the Instituto de Astrofisica de Canarias, the Michigan State/Notre Dame/JINA Participation Group, Johns Hopkins University, Lawrence Berkeley National Laboratory, Max Planck Institute for Astrophysics, Max Planck Institute for Extraterrestrial Physics, New Mexico State University, New York University, Ohio State University, Pennsylvania State University, University of Portsmouth, Princeton University, the Spanish Participation Group, University of Tokyo, University of Utah, Vanderbilt University, University of Virginia, University of Washington, and Yale University.

\section*{Declarations}

The authors have no relevant financial or non-financial interests to disclose.
The authors have no competing interests to declare that are relevant to the content of this article.
All authors certify that they have no affiliations with or involvement in any organization or entity with any financial interest or non-financial interest in the subject matter or materials discussed in this manuscript.
The authors have no financial or proprietary interests in any material discussed in this article.

\clearpage

\section{Figures}
\begin{figure}[!ht]
    \centering
    \includegraphics[width=0.6\linewidth]{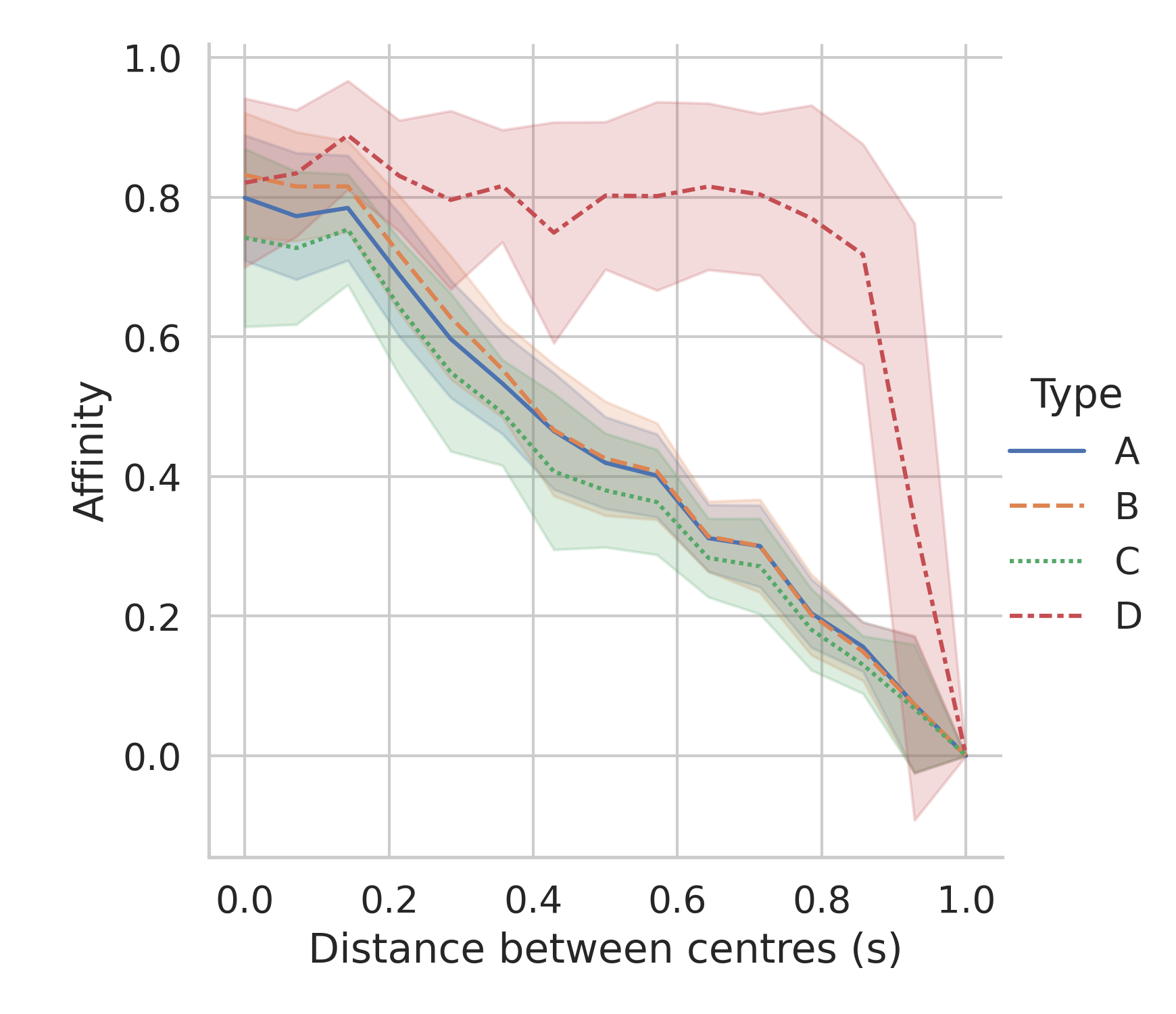}
    \caption{Mean and standard deviation of the affinities of the matches between two circles of radius 1 with centers shifted according to the horizontal axis. The circles were sampled with \(N = 100\) points without noise added. We considered 15 equidistant distances between 0 and 1 and took 15 samples at each step.}
    \label{fig:comparison_affinities}
\end{figure}

\begin{figure}[!ht]
\centering
\begin{subfigure}{0.45\linewidth}
    \includegraphics[width=\linewidth]{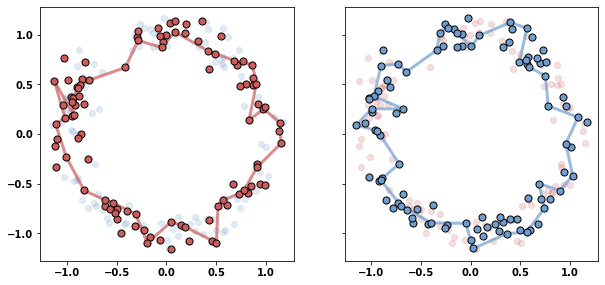}
    \caption{A match with affinity \(\rho_A = 0.9435\).}
    \label{subfig:first_match}
\end{subfigure}
\hfill
\begin{subfigure}{0.5\linewidth}
    \includegraphics[width=\linewidth]{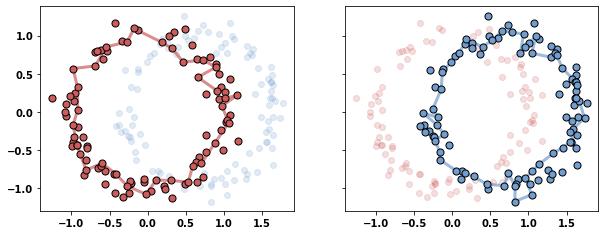}
    \caption{A match with affinity \(\rho_A = 0.2724\).}
    \label{subfig:second_match}
\end{subfigure}
\hfill
\begin{subfigure}{0.45\linewidth}
    \includegraphics[width=\linewidth]{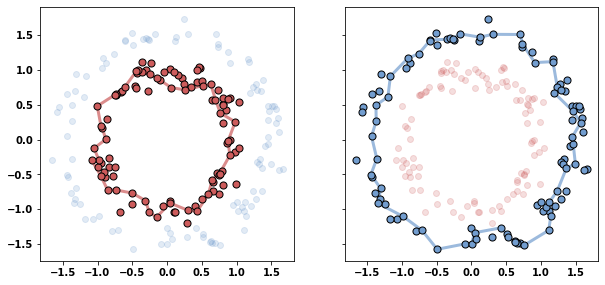}
    \caption{A match with affinity \(\rho_A = 0.2649\)}
    \label{subfig:third_match}
\end{subfigure}
\caption{Three matches between samples of 100 points of two circles with Gaussian noise of magnitude 0.1 added. In Figure \ref{subfig:first_match}; the circles have the same radii and centres; in Figure \ref{subfig:second_match} the circles have the same radii and centres 0.7 units of length apart; and in Figure \ref{subfig:third_match}, the circles have coinciding centres and radii 1 and 1.5.}
\label{fig:3_matches}
\end{figure}

\begin{figure}[!ht]
    \centering
    \includegraphics[width = \linewidth]{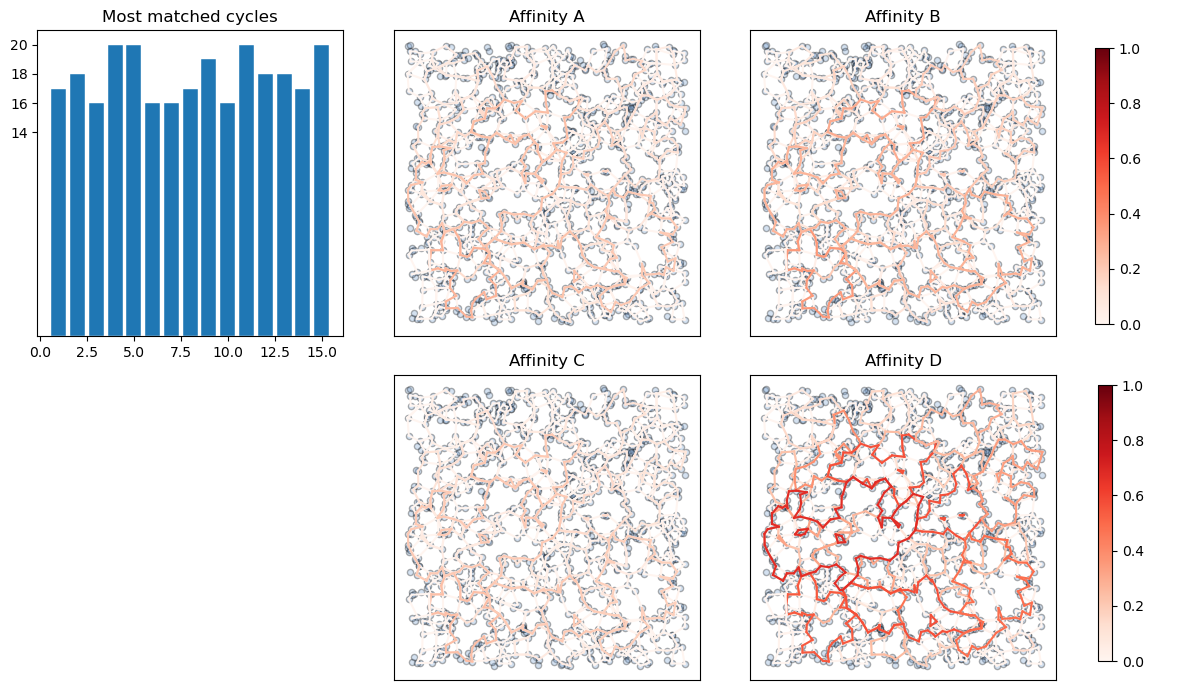}
    \caption{Most matched intervals in resamplings of \(N=1000\) points of the uniform distribution in the unit square. Top left: Frequency of reappearance of the 15 most frequently matched intervals. Remainder of the figure: Cycles representing the persistence intervals of \(X_{\mathrm{ref}}\) stained by their prevalence score using the affinity specified above of the image.}
    \label{fig:random_cycles}
\end{figure}

\begin{figure}[!ht]
    \centering
    \includegraphics[width = \linewidth]{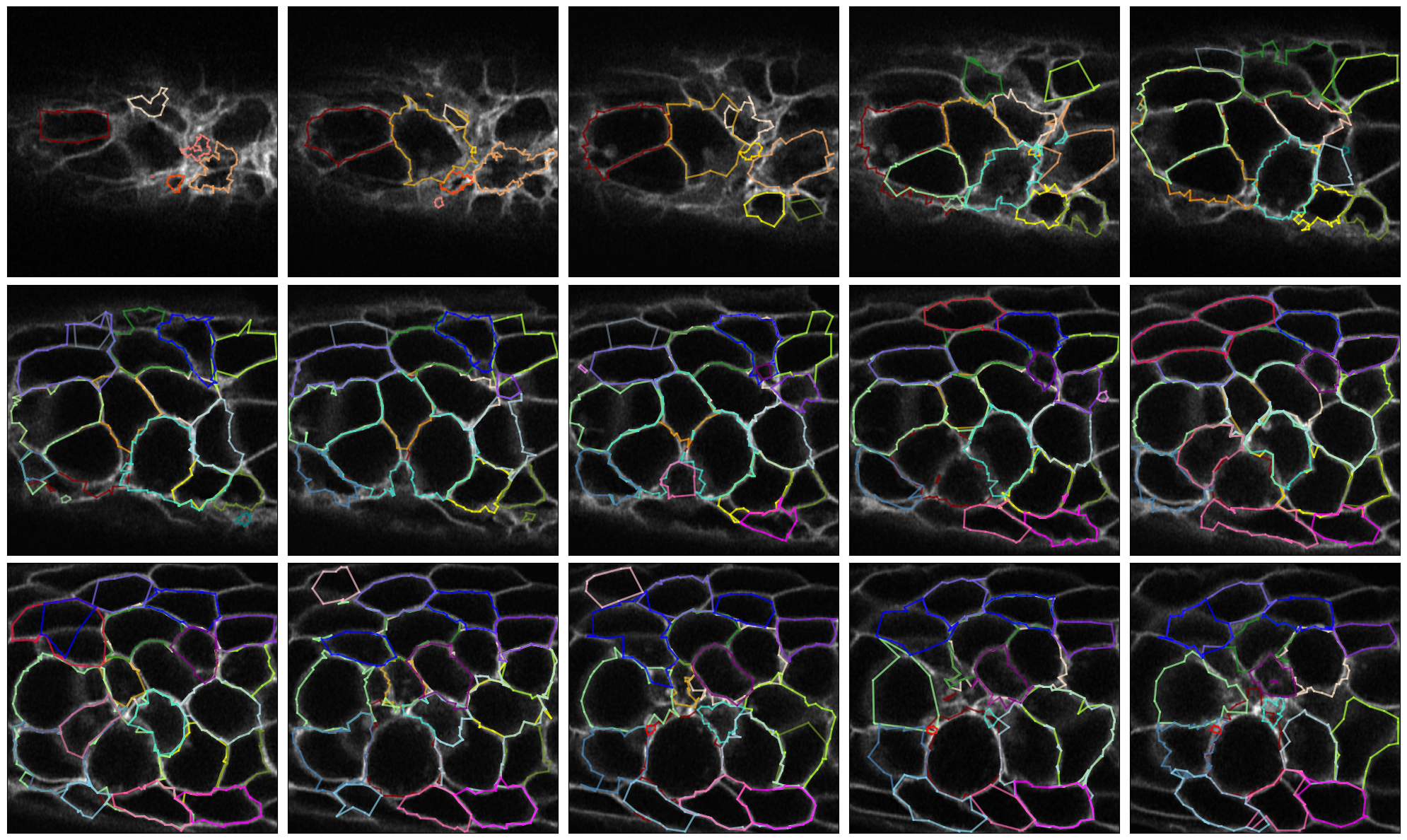}
    \caption{Cycle matching to track cell contours on images of slices of the posterior Lateral Line Primordium (pLLP) of the zebrafish. We applied an Otsu threshold and took samples of \(N = 1000\) points on the images. Cycles matched across consecutive slices can be grouped into tunnels, each stained with a different color. Data courtesy of \cite{10.7554/eLife.55913}.}
    \label{fig:2D_slices}
\end{figure}

\begin{figure}[!ht]
    \centering
    \includegraphics[trim = {0 0 0 0}, clip, width=\linewidth]{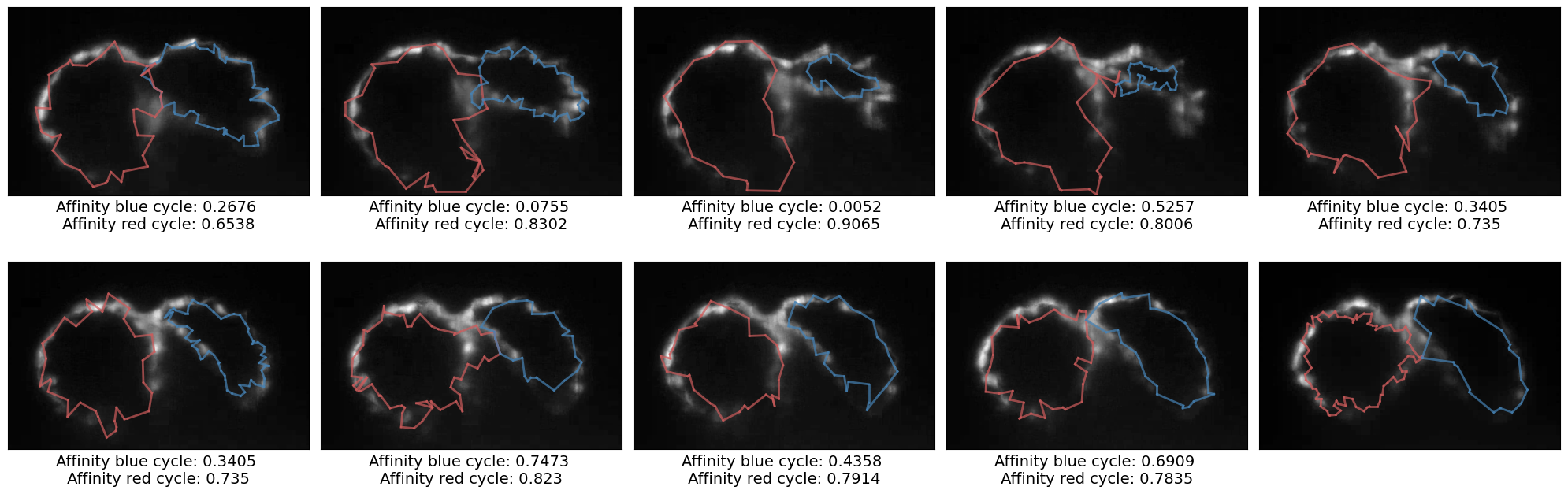}
    \caption{Cycle matching on 10 frames from a video courtesy of \cite{scherz_developing_hearts_2008}. We took samples of \(N = 500\) points and applied a threshold based on the mean of the gray-scale values before sampling. The intervals matched are stained in the same color. Below each image we display the affinity of the match between the interval in the image and the corresponding interval in the subsequent image.}
    \label{fig:heartbeat}
\end{figure}

\begin{figure}[!ht]
    \centering
    \includegraphics[width=\linewidth]{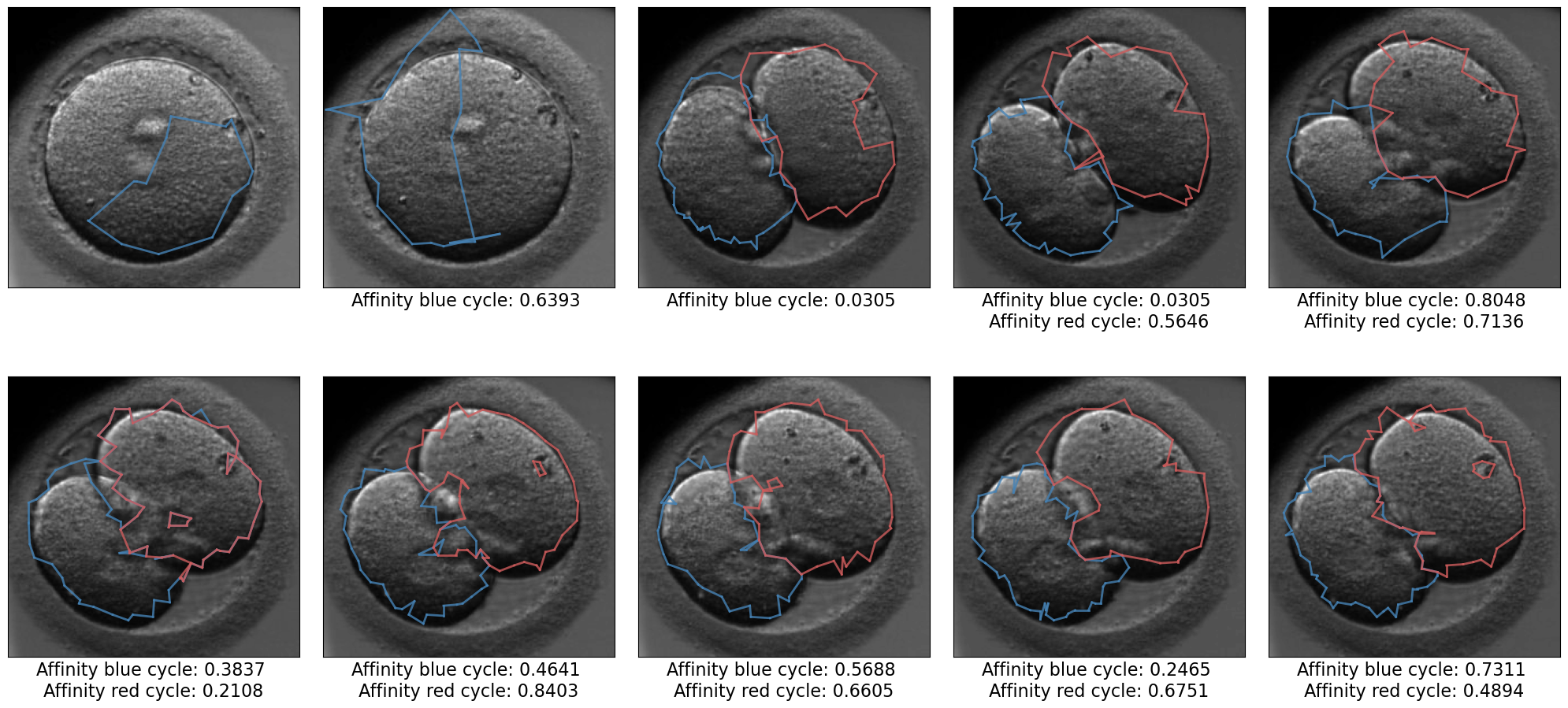}
    \caption{
    Cycle matching on the time-lapse embryo dataset \citep{GOMEZ2022108258} between samples of \(N = 500\) points in the images after applying a Sato operator and a threshold using the Otsu method. Cycles that get matched are stained in the same color. Below each image one can find the affinity of the match between the interval in the image and the corresponding interval in the previous image.}
    \label{fig:embryogenesis}
\end{figure}

\begin{figure}[!ht]
    \centering
    \includegraphics[trim = {0 2cm 0 2cm}, clip, width=.6\linewidth]{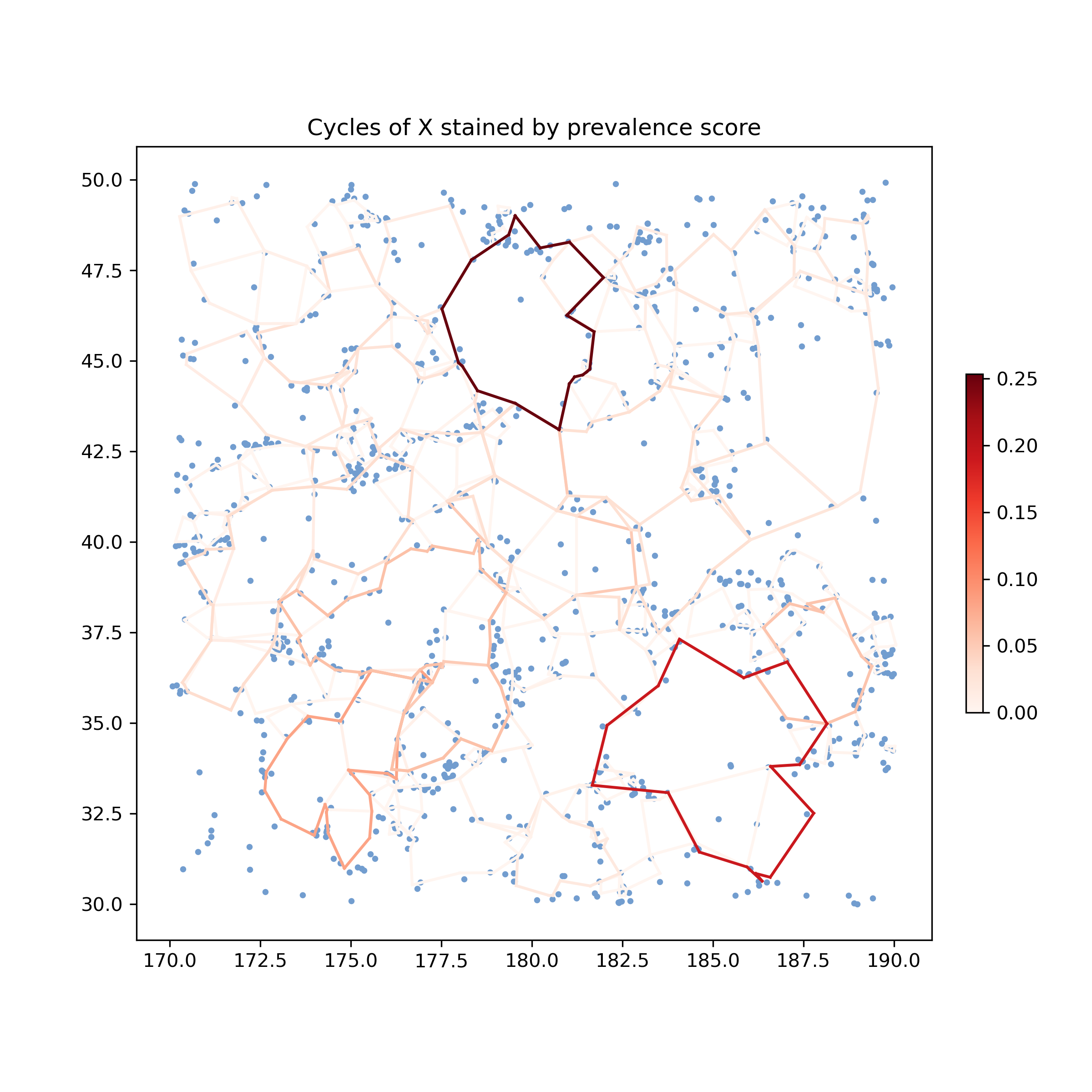}
    \caption{Prevalent cycles in the cosmic web. Cycle representatives are stained by prevalence score and galaxies (from the original BOSS CMASS data) are shown as blue dots. We formed the reference space by sampling the galaxies with $N_\mathrm{ref} = 1000$ perturbed points and performed $K = 20$ comparisons to spaces formed by sampling with $N = 300$ perturbed points each.}
    \label{fig:cosmic}
\end{figure}

\begin{figure}
    \centering
    \includegraphics[trim = {0 1cm 0 2cm}, clip, width=.8\linewidth]{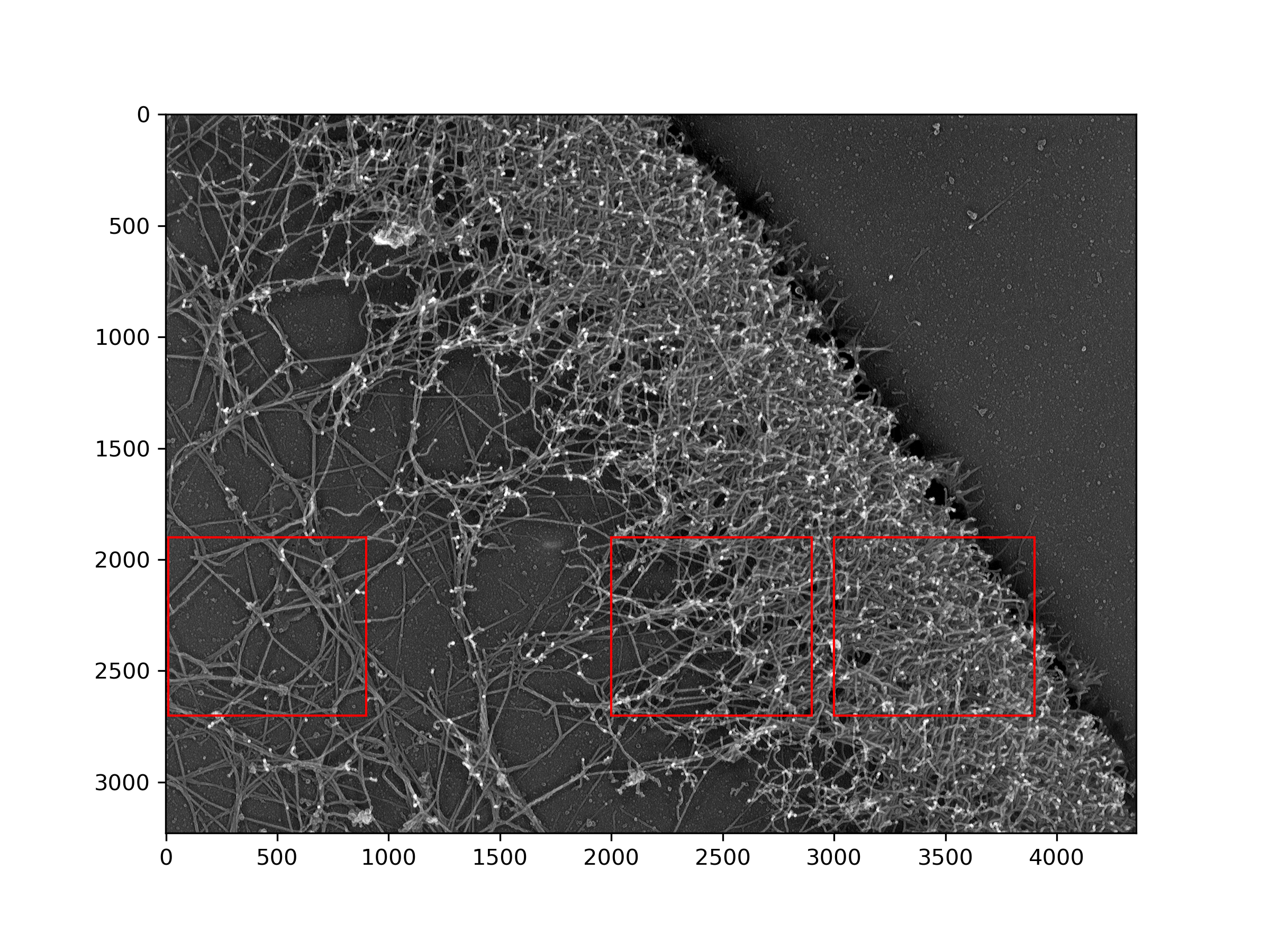}
    \caption{Electron micrograph of actin network in Xenopus keratocyte lamellipodium, whose rear part disassembled in the course of unprotected extraction and front part remained dense as in control cells. Selected crops I, II, III (from left to right) are shown as red rectangles. Original image CIL:24800 is from the Cell Image Library database \citep{ellisman_cell_2021}, available under CC BY-NC-SA 3.0 License, corresponding to Figure 6b of \cite{svitkina_arp23_1999}.}
    \label{fig:actin_data}
\end{figure}

\begin{figure}
     \centering
     \begin{subfigure}[b]{0.92\textwidth}
         \centering
         \includegraphics[trim = {0 0 0 0}, clip, width=1\textwidth]{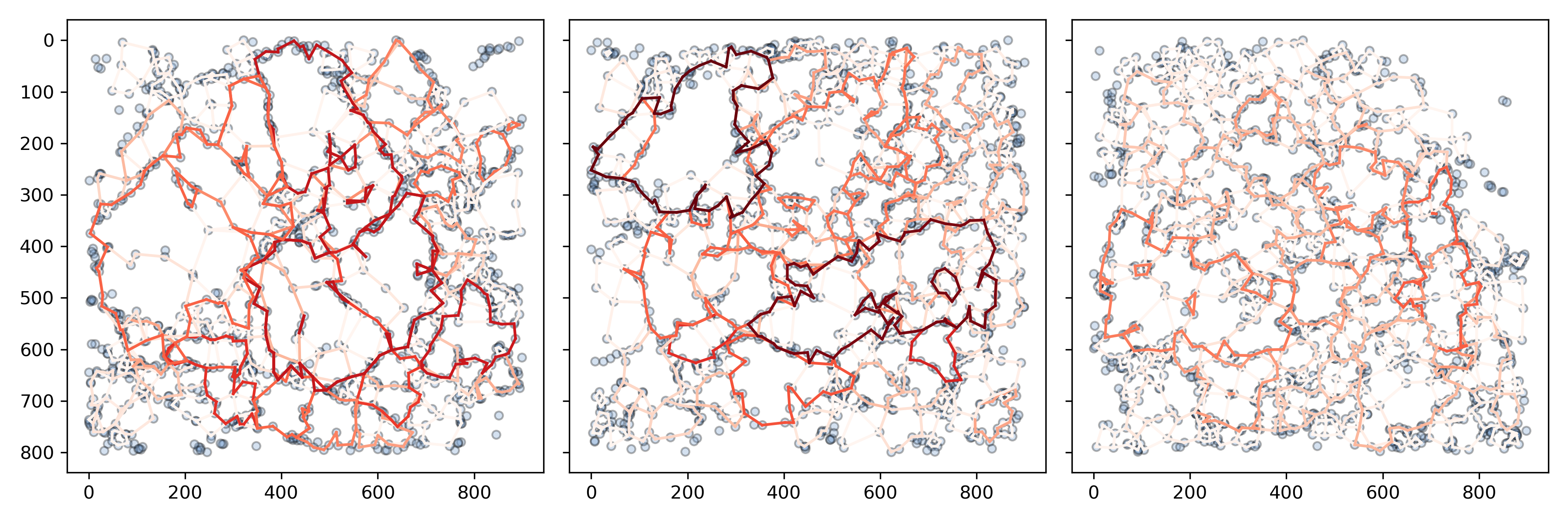}
     \end{subfigure}
     \hfill
     \begin{subfigure}[b]{0.06\textwidth}
         \centering
         \includegraphics[trim = {0 0 0 0}, clip,width=\textwidth]{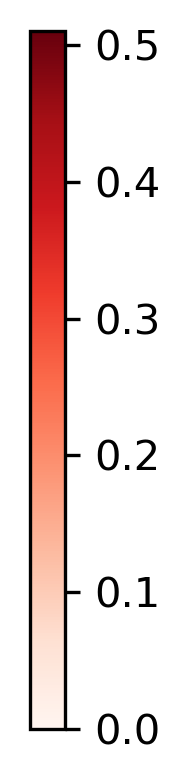}
         ~~~
     \end{subfigure}
        \caption{Prevalent cycles of reference space $X$ with $N_\mathrm{ref} = 1200$ points (shown as blue dots), based on $K = 30$ resampling spaces of $N = 500$ points. Cycle representatives are stained by prevalence score. From left to right: crops I, II, III. Colorbar describes prevalence scores.}
        \label{fig:actin_stained_cycles}
\end{figure}

\begin{figure}[!ht]
     \centering
     \begin{subfigure}[b]{0.92\textwidth}
         \centering
         \includegraphics[trim = {0 0 0 0}, clip, width=1\textwidth]{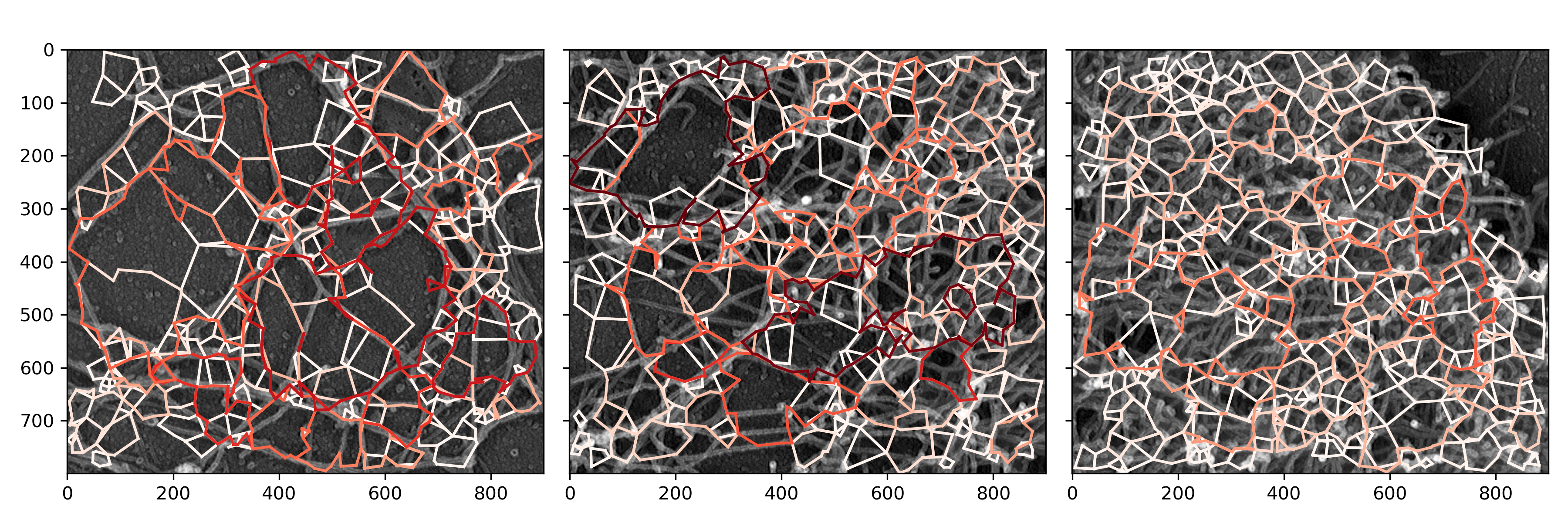}
     \end{subfigure}
     \hfill
     \begin{subfigure}[b]{0.06\textwidth}
         \centering
         \includegraphics[trim = {0 0 0 0}, clip,width=\textwidth]{actin/all_colorbar.png}
         ~~~
     \end{subfigure}
        \caption{Prevalent cycles overlayed on original image (see Figures \ref{fig:actin_data} and \ref{fig:actin_stained_cycles}). From left to right: crops I, II, III. Colorbar describes prevalence scores.}
        \label{fig:actin_overlayed_cycles}
\end{figure}

\begin{figure}[!ht]
     \centering     
     \begin{subfigure}[b]{\textwidth}
         \centering
         \includegraphics[trim = {0 0 1.8cm 0}, clip, width=1\textwidth]{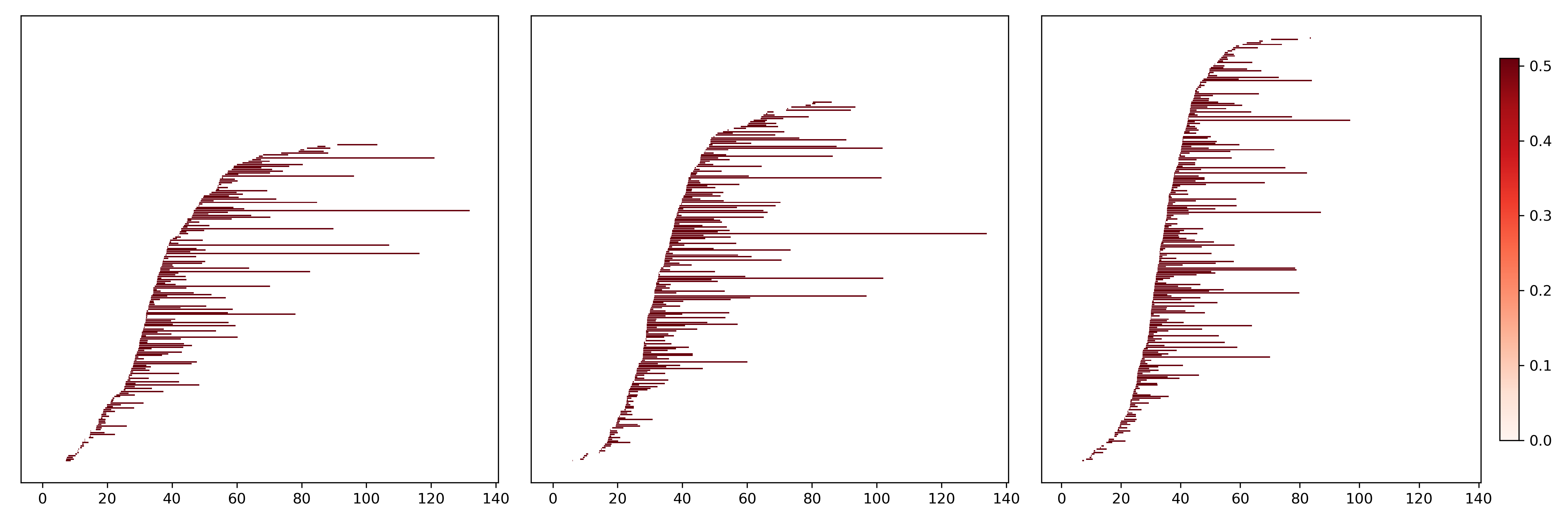}
     \end{subfigure}
        \caption{Persistence barcodes of reference space $X$. Values on the horizontal axis correspond to birth time and length of a bar to persistence. From left to right: crops I, II, III.}
        \label{fig:actin_barcodes}
\end{figure}

\begin{figure}[!ht]
     \centering
     \begin{subfigure}[b]{0.92\textwidth}
         \centering
         \includegraphics[trim = {0 0 1.8cm 0}, clip, width=1\textwidth]{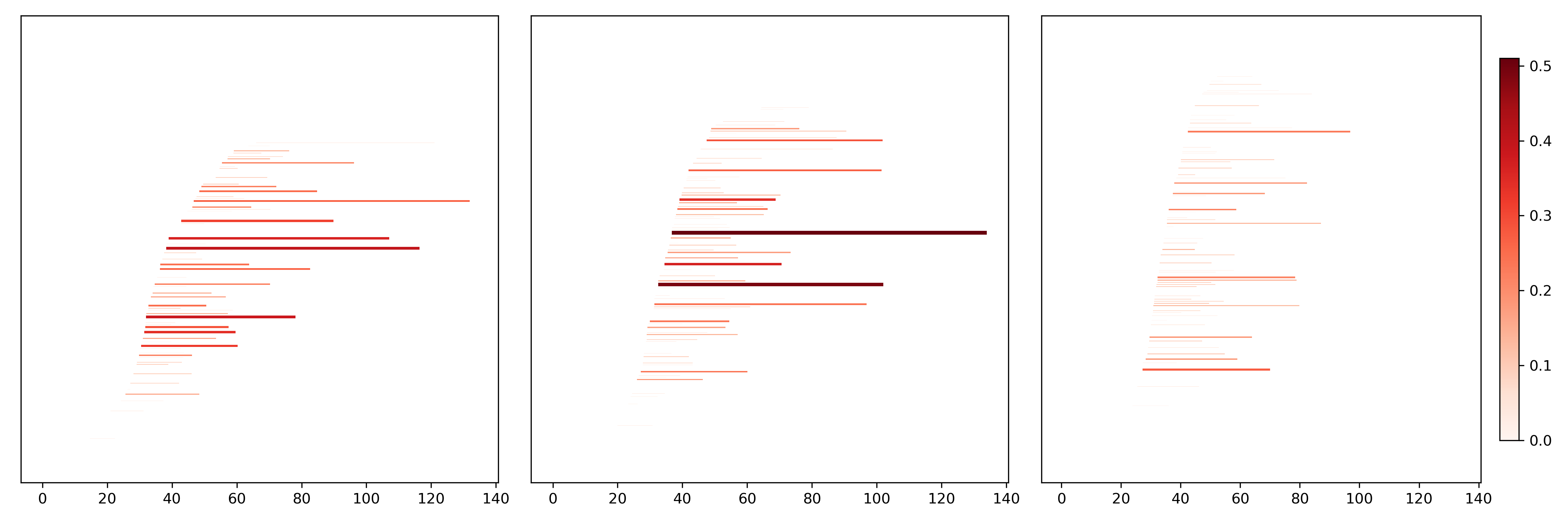}
     \end{subfigure}
     \hfill
     \begin{subfigure}[b]{0.06\textwidth}
         \centering
         \includegraphics[trim = {0 0 0 0}, clip,width=\textwidth]{actin/all_colorbar.png}
         ~~~
     \end{subfigure}
        \caption{Prevalence-augmented persistence barcodes of reference space $X$. Thickness and color of a bar correspond to prevalence score.
        Values on the horizontal axis correspond to birth time and length of the bars to persistence. From left to right: crops I, II, III. Colorbar describes prevalence scores.}
        \label{fig:actin_prevalence_barcodes}
\end{figure}

\begin{figure}[!ht]
    \centering
    \includegraphics[trim = {0 0 0 0}, clip, width=\linewidth]{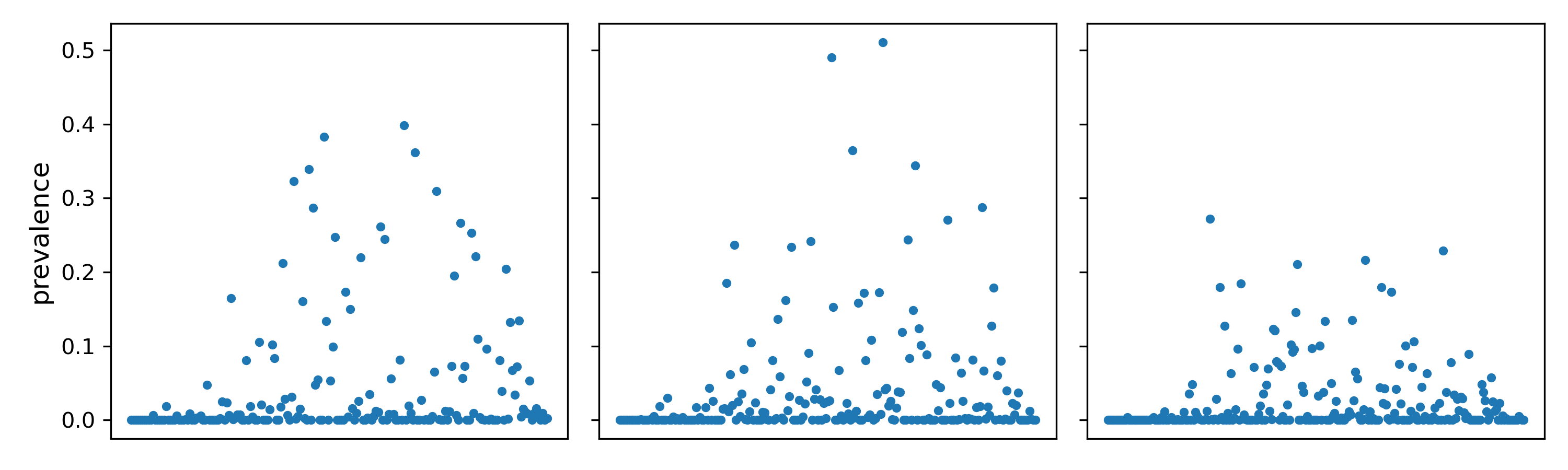}
    \caption{Prevalence scores, sorted by birth time of the persistence interval. One dot stands for one interval. From left to right: crops I, II, III.}
    \label{fig:actin_scores}
\end{figure}

\begin{figure}[!ht]
    \centering
    \includegraphics[trim = {0 0 0 0}, clip, width=\linewidth]{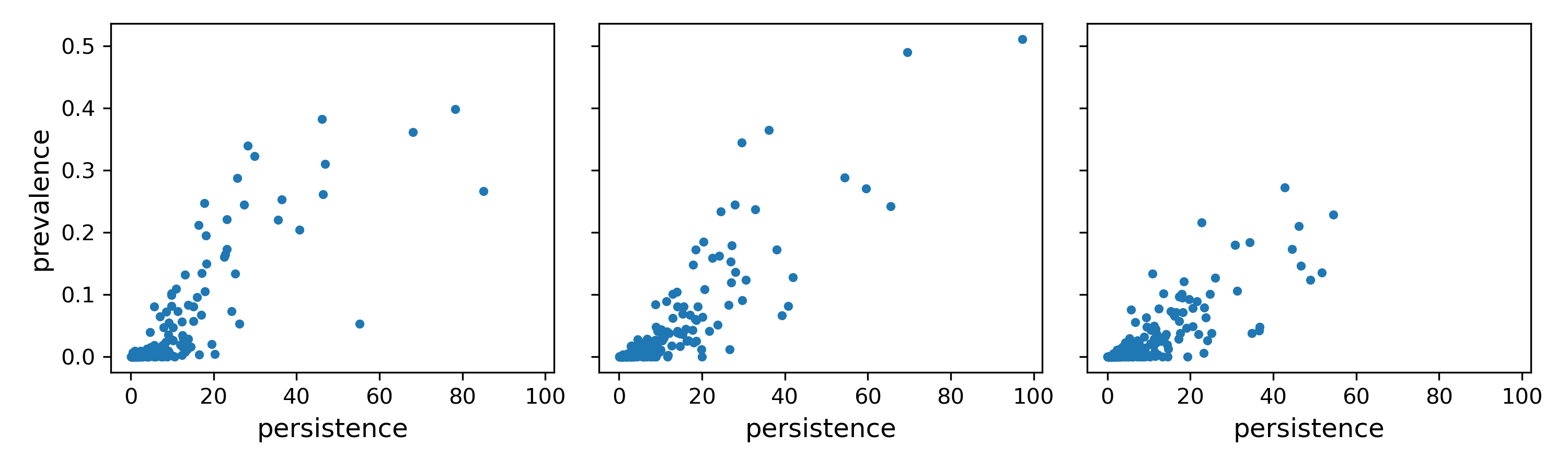}
    \caption{Persistence vs prevalence scores in the augmented barcode of the reference space $X$. One dot stands for one persistence interval. From left to right: crops I, II, III.}
    \label{fig:actin_persist_vs_preval}
\end{figure}

\begin{figure}[!ht]
    \centering
    \includegraphics[clip, width=\linewidth]{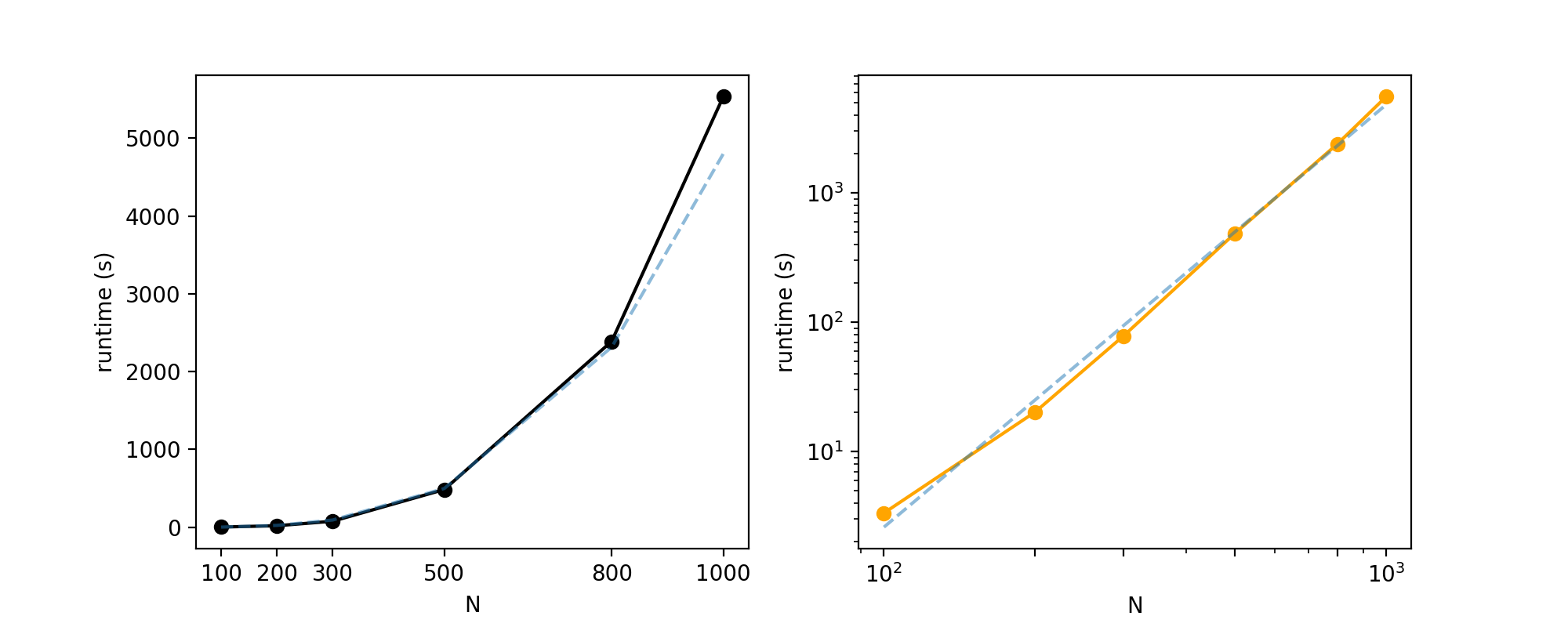}
    \caption{Median runtime v.s. number of sampling points $N$ for the synthetic examples of Table \ref{tab:runtime_synth}. Median runtime increases as a power law $T \sim \mathrm{cst} \, N^{3.266}$ with respect to $N$ (a linear regression on the logarithmic scale gave a score of $0.996$, coefficient $3.266$ and intercept $-14.087$ with respect to the data points of Table \ref{tab:runtime_synth}). Left: linear-linear scale. Right: log-log scale. The dashed line corresponds to the result of the linear regression on the log-log scale.}
    \label{fig:runtime_regression}
\end{figure}

\clearpage

\newpage

\bibliographystyle{authordate3}
\bibliography{coho_cycle_matching_ref}

\end{document}